\documentclass[11pt]{article}

\usepackage{amsthm,amsmath}
\usepackage{natbib}
\usepackage{amsfonts}
\usepackage{amssymb}
\RequirePackage[OT1]{fontenc}
\usepackage[usenames,dvipsnames]{xcolor}
\usepackage{setspace}

\numberwithin{equation}{section}
\theoremstyle{plain}
\newtheorem{thm}{Theorem}[section]
\theoremstyle{remark}
\newtheorem{rem}{Remark}[section]

  \theoremstyle{remark}

  \theoremstyle{plain}
  \theoremstyle{definition}
  \newtheorem{example}[thm]{\protect\examplename}
  \newtheorem{lemma}[thm]{Lemma
  }
\usepackage[english]{babel}
  \providecommand{\examplename}{Example}
\def\ci{\perp\!\!\!\perp}
\usepackage[tmargin=1.0in,bmargin=1.0in,rmargin=1.0in,lmargin=1.0in]{geometry}

\title{Duality in Graphical Models}
\author{Dhafer Malouche\thanks{Ecole Sup\'{e}riore de la Statistique et de l'Analyse de l'Information,  Universit\'{e} de Carthage, Tunisia, \texttt{dhafer.malouche@me.com}} 
  \and Bala Rajaratnam \thanks{Department of Statistics, Stanford University, Stanford CA 94305, \texttt{brajarat@stanford.edu} }
  \and Benjamin T. Rolfs \thanks{Institute for Computational and Mathematical Engineering, Stanford University, Stanford CA 94305, \texttt{benrolfs@stanford.edu} }}


\begin{document}

\maketitle

\begin{abstract}
Graphical models have proven to be powerful tools for representing high-dimensional systems of random variables. One example of such a model is the undirected graph, in which lack of an edge represents conditional independence between two random variables given the rest. Another example is the bidirected graph, in which absence of edges encodes pairwise marginal independence. Both of these classes of graphical models have been extensively studied, and while they are considered to be dual to one another, except in a few instances this duality has not been thoroughly investigated. In this paper, we demonstrate how duality between undirected and bidirected models can be used to transport results for one class of graphical models to the dual model in a transparent manner. We proceed to apply this technique to extend previously existing results as well as to prove new ones, in three important domains. First, we discuss the pairwise and global Markov properties for undirected and bidirected models, using the pseudographoid and reverse-pseudographoid rules which are weaker conditions than the typically used intersection and composition rules. Second, we investigate these pseudographoid and reverse pseudographoid rules in the context of probability distributions, using the concept of duality in the process. Duality allows us to quickly relate them to the more familiar intersection and composition properties. Third and finally, we apply the dualization method to understand the implications of faithfulness, which in turn leads to a more general form of an existing result. 
\end{abstract}




\onehalfspacing

\section{Introduction}

Graphical models are used to study conditional independence statements about a set of $p < \infty$ random variables $\mathbf{X} = \{X_v\}_{v\in V}$ where $V = \{1,2,\dots,p\}$. Most generally, a graphical model for such a system is a graph $\mathcal{G} = (V,E)$. In such a graph, inclusion of $(a,b)$ in the edge set $E$ is predicated on some conditional independence statement about $X_a$ and $X_b$. For example, the undirected graph corresponding to $\mathbf{X}$ (called the concentration graph in some settings, such as \citep{Pearl1986}) is constructed such that $(a,b) \not\in E$ if and only if $X_a$ and $X_b$ are conditionally independent given the rest of the variables. The bidirected graph (called the covariance graph in some settings, such as \citep{Cox1993, Kauermann1996}), is constructed using the rule $(a,b) \not \in E$ if and only if $X_a$ and $X_b$ are independent. 

Undirected and bidirected graphical models have been widely studied in the literature \cite[see][for references]{Cox1996, Lauritzen1996, WhittakerBook}. Important to the study of such models are the equivalence between the pairwise Markov and global Markov properties (see Section \ref{subsec:markov_props}), general conditions for such equivalences, and the concept of faithfulness. Under certain conditions, graphical models defined using pairwise relations also encode more complicated global conditional independence statements. These conditional independence statements are typically represented by separation statements in an undirected or bidirected graph $\mathcal{G}$, and when this occurs the random variable $\mathbf{X}$ is said to be globally Markov with respect to $\mathcal{G}$. More specifically, when the global Markov condition is satisfied, separation of two disjoint subsets $A$ and $B$ given a third separating subset $S$ implies a conditional independence statement about $\mathbf{X}_A = \{X_a\}_{a \in A}$, $\mathbf{X}_B$, and $\mathbf{X}_S$. The ability of a graphical model $\mathcal{G}$ to encode such complex conditional independence statements is important when using such models in applications. The reverse containment, that all conditional independence statements within a random vector being encoded by separation statements in a given graph, is known as faithfulness.

Several authors have specified conditions under which the pairwise and global Markov properties are equivalent in both undirected and bidirected models \citep[][and others]{Kauermann1996, Pearl1986}. Conditions under which a distribution is faithful to a graphical model have also been formulated for undirected and bidirected trees \citep[and others]{Becker2005, Malouche2011}. However, although undirected and bidirected graphical models are known to be dual to each other (a notion formalized by \cite{Matus1992a}), they have frequently been treated differently -- especially when proving properties of such models. In several instances, authors have succeeded in obtaining results for bidirected graphs that parallel those for undirected graphs, but as we shall demonstrate have used more complicated proof techniques than necessary. In this paper, we demonstrate that many of these results could have been achieved using the dual framework of \citet{Matus1992b}, and use this formalism to develop even more general results. Our approach shows how without exception, results on bidirected graphs can be easily obtained by analyzing undirected graphs.

Besides introducing and investigating the important technique of using duality to ``transport" results in the undirected graph setting to the bidirected graph setting, we also enumerate below the additional novel contributions in the paper. First, weaker conditions -- the so-called pseudographoid rules -- for the equivalence between the pairwise and global Markov conditions on both undirected and bidirected graphs is given. Duality is used to adapt the result on undirected graphs to the dual result on bidirected graphs. Second, the relationship between the familiar intersection/composition properties and the  more general pseudographoid rules is formally derived. Third, a result on faithfulness in the bidirected graph setting, known to be true only for Gaussian random variables, is generalized in a significant way. In many cases, ``direct" proofs are given side by side with proofs using duality, to underscore the power of the technique.

The paper is organized as follows. In Section \ref{sec:prelims}, we introduce preliminary notation and concepts. The general approach taken by the paper is outlined in Section \ref{sec:duality}. The equivalence between pairwise and global Markov properties is considered in Section \ref{sec:pseudo_global}. These conditions, called the pseudographoid and reverse pseudographoid rules, are studied in greater detail in Section \ref{sec:semi_and_pseudo}. Finally, the idea of faithfulness is investigated in Section \ref{sec:faithful_trees}, and the use of duality allows extension of a result which formerly applied only to Gaussian distributions to a more general setting.

\section{Preliminaries}
\label{sec:prelims}
In this section, notation concerning conditional independence structures and graphical models is introduced. These are then stated in the language of relations as in \cite{Matus1992b}, a formalism which is used throughout this paper. Some preliminary results regarding various closure rules on the set of relations are derived. 

\subsection{Conditional Independence}
\label{subsec:cond_independence}

In the sequel, we will distance ourselves from conditional independence structures of probability distributions and instead work with general relations (defined in Section \ref{subsec:relations}). However, it is useful to use probability distributions as a motivating example before moving to the language of relations.

Throughout this paper, $\mathbf{X} = \{X_v\}_{v\in V}$ is a random vector indexed by a finite set $V$. For any $A\subseteq V$, $\mathbf{X}_A = \{X_a\}_{a \in A}$ denotes a sub-vector of $\mathbf{X}$. For disjoint $A, B, C \subseteq V = \{1,\dots,p\}$, with $\mathbf{X}_A$, $\mathbf{X}_B$, and $\mathbf{X}_C$ taking values in $\sigma$-algebras $\mathcal{S}_A$, $\mathcal{S}_B$, and $\mathcal{S}_C$ respectively, $X_A$ is said to be conditionally independent of $X_B$ given $X_C$ if $\forall S_A \in \mathcal{S}_A, S_B \in \mathcal{S}_B, S_C \in \mathcal{S}_C$, 
\begin{align*}
P(\mathbf{X}_A \in S_A, \mathbf{X}_B \in S_B | \mathbf{X}_C \in S_C) & = P(\mathbf{X}_A \in S_A | \mathbf{X}_C \in S_C) P(\mathbf{X}_B \in S_B | \mathbf{X}_C \in S_C),  
\end{align*}
where $P$ is the law of $\mathbf{X}$. In this case we say that
\begin{align*}
\mathbf{X}_A \ci \mathbf{X}_B | \mathbf{X}_C,
\end{align*}
which will henceforth be shortened to $A \ci B | C$ when there is no ambiguity. 

Any system of random variables satisfies the following \emph{semigraphoid rules} \citep{Pearl1986}:
\begin{itemize}
\item[1.] Symmetry $(S)$: $A \ci B | S \iff B \ci A | S$
\item[2.] Decomposition $(D)$: $ A \ci BC | S \Rightarrow A \ci B | S$
\item[3.] Weak Union $(U)$: $A \ci BC | S \Rightarrow A \ci B | SC$. 
\item[4.] Contraction $(C)$: $A \ci B | S \land A \ci C | BS \Rightarrow A \ci BC | S$ ,
\end{itemize}
where $BC$ is shorthand for $B\cup C$ when there is no ambiguity, and all sets $A,B,C,S$ above are disjoint. 
Above, $B$ and $C$ can represent any partition of $BC = B\cup C$ with $B\cap C = \emptyset$. Note that although any set of random variables necessarily satisfies the semigraphoid axioms above, the list is not complete; in fact, no finite characterization of random variables using such rules exists \citep{Studeny1992}. 

Any random vector $\mathbf{X}$ also satisfies the following \emph{localizability} property \cite[see][Lemma 3]{Matus1992b}: 
\begin{itemize}
\item[5.] Localizability $(L)$: $A \ci B | S \iff a \ci b | S^\prime, \ \forall a \in A, b \in B, S \subseteq S^\prime \subseteq SAB\backslash ab,$
\end{itemize}
where lowercase letters, above and henceforth, represent singletons, i.e., $a = \{a\}$. More generally, \cite{Matus1992b, Matus1997} demonstrates that any semigraphoid is localizable.

Furthermore, $\mathbf{X}$ may satisfy 
\begin{itemize}
\item[6.] Intersection (I): $A \ci B |SC \land A \ci C |SB \Rightarrow A \ci BC | S$
\item[7.] Composition (M): $A\ci B |S \land A \ci C |S \Rightarrow A \ci BC |S$.
\end{itemize}
If $\mathbf{X}$ admits a strictly positive density, it follows that $\mathbf{X}$ satisfies the intersection rule \citep{Lauritzen1996}. It is well-known that Gaussian random variables satisfy both intersection and composition. Some simple results relating the above properties are presented in Section \ref{subsec:relations}.

\subsection{Relations}
\label{subsec:relations}

In the sequel, it will be useful to abstract conditional independence structures using the notation of \emph{relations}. For a finite set $V$, define $\mathcal{T}(V)$ as the set of all triples $(A,B|C)$ with $A,B,C \subseteq V$ pairwise disjoint and $A,B$ both nonempty. A subset of $\mathcal{T}(V)$ will be referred to as a \emph{relation}. For a random vector $\mathbf{X}$ indexed by $V$, define the relation $[\mathbf{X}] \subseteq \mathcal{T}(V)$ by $(A,B|C) \in [\mathbf{X}] \iff A \ci B | C$ (where $A,B,C \subseteq V$ are disjoint) which is called the \emph{conditional independence structure} of $\mathbf{X}$. As all relations in general will be subsets of $\mathcal{T}(V)$, the dependence on $V$ will be henceforth suppressed.

All of the closure rules of Section \ref{subsec:cond_independence} can translated into the notation of relations. For example,  $\mathcal{L} \subseteq \mathcal{T}$ is a semigraphoid if it is closed under the semigraphoid rules in the sense that
\begin{align*}
&(S): \ (A, B | S) \in \mathcal{L} \iff (B, A | S) \in \mathcal{L}\\
&(D): \ (A, BC | S) \in \mathcal{L} \Rightarrow (A, B | S) \in \mathcal{L}\\
&(U): \ (A, BC | S) \in \mathcal{L} \Rightarrow (A, B | SC) \in \mathcal{L}\\
&(C): \ (A, B | S), (A, C | BS) \in \mathcal{L} \Rightarrow (A, BC | S) \in\mathcal{L}. 
\end{align*}
The following simple lemma (stated by \citet{Matus1992a} without proof) gives a compact representation of properties $(D), (U)$, and $(C)$ above.
\begin{lemma}
\label{lem:semi_parsim}
A relation $\mathcal{L}$ is closed under decomposition $(D)$, weak union $(U)$, and contraction $(C)$ if and only if it holds that 
\begin{align*}
(DUC): \ (A, BC | S) \in \mathcal{L} \iff (A, B | S), (A, C | SB) \in \mathcal{L}. 
\end{align*}
\end{lemma}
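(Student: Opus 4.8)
The plan is to prove the two directions of the stated equivalence separately, after first observing that the compound rule $(DUC)$ is itself a biconditional and therefore splits into a forward implication (which I expect to correspond to $(D)$ and $(U)$) and a backward implication (which should be exactly $(C)$). Throughout I will exploit the fact that the union in the second slot is commutative, so that $(A, BC \mid S)$ and $(A, CB \mid S)$ denote the same triple and any rule may be invoked with the roles of $B$ and $C$ interchanged; the remark following the rules in Section \ref{subsec:cond_independence} licenses exactly this.

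First I would assume $\mathcal{L}$ is closed under $(D)$, $(U)$, and $(C)$, and establish $(DUC)$. For the forward implication of $(DUC)$, I start from $(A, BC \mid S) \in \mathcal{L}$: applying $(D)$ directly yields $(A, B \mid S) \in \mathcal{L}$, while applying $(U)$ to the triple $(A, CB \mid S)$ (that is, with $B$ and $C$ swapped) yields $(A, C \mid SB) \in \mathcal{L}$, and together these give the right-hand side. The backward implication of $(DUC)$ is then just a verbatim restatement of contraction $(C)$, namely that $(A, B \mid S), (A, C \mid SB) \in \mathcal{L}$ imply $(A, BC \mid S) \in \mathcal{L}$.

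Conversely, assuming $(DUC)$ holds, I would derive each of $(D)$, $(U)$, $(C)$ in turn. Decomposition $(D)$ follows from the forward implication of $(DUC)$ by simply discarding the second conclusion, and contraction $(C)$ is precisely the backward implication of $(DUC)$. The one step requiring care is weak union $(U)$: beginning from $(A, BC \mid S) \in \mathcal{L}$, I apply the forward implication of $(DUC)$ to the identical triple $(A, CB \mid S)$, which produces $(A, C \mid S), (A, B \mid SC) \in \mathcal{L}$; retaining the second of these gives exactly $(A, B \mid SC) \in \mathcal{L}$, as required.

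Since the whole argument reduces to matching the two halves of the biconditional $(DUC)$ against the three separate rules, I do not anticipate any substantive obstacle. The only place to stay vigilant is the commutativity step used to recover weak union, where $B$ and $C$ must be interchanged \emph{before} $(DUC)$ is applied; overlooking this is the natural way such a routine bookkeeping proof could go wrong.
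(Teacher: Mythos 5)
Your proof is correct and takes essentially the same approach as the paper's: both match the two halves of the biconditional $(DUC)$ directly against $(D)$/$(U)$ for the forward implication and $(C)$ for the backward one. If anything, you are more careful on the single delicate point---the paper's forward direction writes that $(U)$ yields $(A,B|SC)$ when the target triple is $(A,C|SB)$, silently relying on the $B \leftrightarrow C$ swap that you make explicit, and likewise your explicit derivation of $(U)$ from $(DUC)$ fills in what the paper dismisses as ``clear by definition.''
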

\begin{proof}
$\Rightarrow:$ Suppose first that $\mathcal{L}$ is closed under $(D), (U)$, and $(C)$. If $(A, BC | S) \in \mathcal{L}$, it follows by $(D)$ that $(A, B | S) \in \mathcal{L}$ and by $(U)$ that $(A, B | SC) \in \mathcal{L}$. On the other hand, if $(A, B |S), (A, C | SB) \in \mathcal{L}$, it follows by $(C)$ that $(A, BC | S) \in \mathcal {L}$. 

$\Leftarrow:$ Now suppose that $\mathcal{L}$ is closed under $(DUC)$ defined in the statement of the lemma. Then it is clear by definition that $\mathcal{L}$ is closed under $(D), (U)$, and $(C)$.
\end{proof}
This representation of the semigraphoid rules is parsimonious in the sense that it consists of one if and only if statement. We shall later see that this is convenient when proving results concerning semigraphoids. 

Furthermore, $\mathcal{L}$ is localizable if 
\begin{align*}
(L): \ (A,B|S) \in \mathcal{L} \iff (a,b|S^\prime) \in \mathcal{L} , \ \forall a \in A, b \in B, S \subseteq S^\prime \subseteq SAB\backslash ab. 
\end{align*}
Note that for any random variable $\mathbf{X}$, the corresponding conditional independence structure $[\mathbf{X}]$ is a semigraphoid \citep{Pearl1986}. Under the localizability condition, \emph{global} statements about general triples $(A,B|C)$ can be constructed from \emph{pairwise} triples of the form $(a,b|C)$, where $a,b$ are singletons. Specifically, define $\mathcal{S}(V):=\mathcal{S} = \{(a,b|C) \ : a \neq b, C \subseteq V \backslash ab\}$. Global statements about a localizable set of triples $\mathcal{L}  \subseteq \mathcal{T}$ can be made when working only with $\mathcal{S}(\mathcal{L}) :=\mathcal{L}  \cap \mathcal{S}$. This technique is used effectively, for example, in \cite{Matus1992b, Matus1997, Lnenicka2007}.

The following lemma shows that localizability is a strictly weaker condition than the semigraphoid rules. 
\begin{lemma}
\label{lem:semi_local_comparison}
Closure of a relation $\mathcal{L}$ under the semigraphoid axioms implies localizability. Closure of $\mathcal{L}$ under localizability implies decomposition $(D)$ and weak union $(U)$, but not contraction $(C)$. 
\end{lemma}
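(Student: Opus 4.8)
The plan is to establish the three assertions separately, unified by the observation that localizability $(L)$ is an \emph{if-and-only-if} identifying a global triple $(A,B|S)$ with the collection of its pairwise ``shadows'' $(a,b|S')$, where $a\in A$, $b\in B$, and $S\subseteq S'\subseteq SAB\backslash ab$. With this viewpoint, $(D)$ and $(U)$ reduce to bookkeeping of set inclusions among these shadows, whereas $(C)$ genuinely combines distinct pairwise statements and so can fail.

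For the first claim, that the semigraphoid axioms imply $(L)$, I would prove the two directions of the defining equivalence separately. The forward direction ($\Rightarrow$) uses only $(S)$, $(D)$, and $(U)$: given $(A,B|S)\in\mathcal{L}$ and writing $S' = S\cup A_0\cup B_0$ with $A_0\subseteq A\backslash a$ and $B_0\subseteq B\backslash b$, one applies weak union to move $B_0$ into the conditioning set, decomposition to shrink the right-hand side to $b$, symmetry, then weak union again to absorb $A_0$, decomposition to shrink to $a$, and symmetry once more, arriving at $(a,b|S')$. The backward direction ($\Leftarrow$) is where contraction enters and is the main technical content. I would argue by induction on $|A|+|B|$: the base case $|A|=|B|=1$ is the assumed pairwise statement (take $S'=S$), and for the inductive step, assuming by symmetry that $|B|\geq 2$, I pick $b\in B$ and set $B_1 = B\backslash b$. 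The two smaller triples $(A,B_1|S)$ and $(A,b|SB_1)$ each satisfy the induction hypothesis, since their pairwise shadows are among the assumed ones (because $B_1\subseteq B$ and $SAB_1\subseteq SAB$); contraction --- equivalently the $\Leftarrow$ half of $(DUC)$ from Lemma \ref{lem:semi_parsim} --- then yields $(A,B|S)\in\mathcal{L}$. This recursive reconstitution of a global statement from pairwise ones is the step I expect to require the most care, and it is precisely the point at which contraction is indispensable.

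For the second claim I would use both directions of $(L)$ together. To prove $(D)$, suppose $(A,BC|S)\in\mathcal{L}$; by the forward direction every shadow $(a,d|S')$ with $d\in BC$ and $S\subseteq S'\subseteq SA(BC)\backslash ad$ lies in $\mathcal{L}$. To conclude $(A,B|S)\in\mathcal{L}$ via the backward direction, I only need the shadows $(a,b|S'')$ with $b\in B$ and $S\subseteq S''\subseteq SAB\backslash ab$; but since $B\subseteq BC$ and $SAB\backslash ab\subseteq SA(BC)\backslash ab$, these form a subcollection of the shadows already known, so $(A,B|S)\in\mathcal{L}$. The argument for $(U)$ is identical after noting that the shadows required for $(A,B|SC)$ have conditioning sets $S''$ with $SC\subseteq S''\subseteq SABC\backslash ab$, which again all lie in the range $S\subseteq S''\subseteq SA(BC)\backslash ab$ controlled by the hypothesis.

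For the third claim I would exhibit a localizable relation violating $(C)$. First I would record that for \emph{any} pairwise core $\mathcal{P}\subseteq\mathcal{S}$, the relation
\[
\mathcal{L} \;=\; \{(A,B|S) \;:\; (a,b|S')\in\mathcal{P}\ \text{ for all } a\in A,\, b\in B,\, S\subseteq S'\subseteq SAB\backslash ab\}
\]
is localizable and satisfies $\mathcal{S}(\mathcal{L})=\mathcal{P}$, since evaluating the defining condition on a pairwise triple forces $S'=S$. Then on $V=\{a,b,c\}$ I would let $\mathcal{P}$ consist of $(a,b|\emptyset)$ and $(a,c|b)$ together with their symmetric partners, but \emph{not} $(a,c|\emptyset)$. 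The hypotheses of contraction, namely $(a,b|\emptyset),(a,c|b)\in\mathcal{L}$, then hold, while $(a,bc|\emptyset)\in\mathcal{L}$ would require, among its shadows, the triple $(a,c|\emptyset)$, which is absent from $\mathcal{P}$. Hence $(A,BC|S)=(a,bc|\emptyset)\notin\mathcal{L}$, so contraction fails, completing the separation.
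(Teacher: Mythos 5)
Your proposal is correct, and in its second and third parts it takes essentially the same route as the paper: the proofs of $(D)$ and $(U)$ are the same shadow-bookkeeping argument with the two directions of $(L)$, and your counterexample is, once the general pairwise-core construction is unwound, exactly the paper's relation $\{(a,b|\emptyset),(a,c|b)\}$ on $V=\{a,b,c\}$ (augmented by symmetric partners, which is harmless since $(L)$ never forces symmetry). The genuine difference is in the first assertion. The paper does not prove that semigraphoids are localizable; it cites \citet{Matus1992b} (Lemma~3) and \citet{Matus1997} (Lemma~1). You instead give a self-contained argument: the forward direction of $(L)$ from $(S)$, $(D)$, $(U)$ alone, and the backward direction by induction on $|A|+|B|$, peeling one element off $B$ and closing with contraction via the $(\Leftarrow)$ half of $(DUC)$. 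Both your decomposition of the shadows of $(A,B_1|S)$ and $(A,b|SB_1)$ into the assumed family and the strict decrease of $|A|+|B|$ check out, so the induction is sound. What this buys is twofold: the lemma becomes independent of the external references, and the argument isolates precisely where contraction enters (only in reconstituting global triples from pairwise ones), which dovetails nicely with the lemma's remaining claims that $(D)$ and $(U)$ survive without contraction while $(C)$ fails. One presentational point you should make explicit: your reduction ``by symmetry'' to the case $|B|\geq 2$ silently invokes the symmetry axiom $(S)$ to convert the assumed shadows $(a',b|S')$ into the shadows $(b,a'|S')$ needed when instead $|A|\geq 2$ and $|B|=1$; this is legitimate for a semigraphoid, but since $(L)$ as defined treats $A$ and $B$ asymmetrically, the step deserves a sentence rather than a ``WLOG.''
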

\begin{proof}
The fact that semigraphoids are localizable is shown in \cite{Matus1992b}, Lemma 3. See also \cite{Matus1997}, Lemma 1. 

Now let $\mathcal{L}$ be localizable. Then 
\begin{align*}
(A, BC |S) \in \mathcal{L} &\Rightarrow_{(L)} (a,b|S^\prime) \in \mathcal{L} , \ \forall a \in A, b \in BC, S \subseteq S^\prime \subseteq SABC\backslash ab \\
& \Rightarrow \ \ \ \ (A, B| S^\prime) \in \mathcal{L}, \ \forall a \in A, b \in B, S\subseteq S^\prime \subseteq SAB\backslash ab \\
&\Rightarrow_{(L)} (A, B | S) \in \mathcal{L}, 
\end{align*}
which shows closure of $\mathcal{L}$ under decomposition $(D)$. Moreover,
\begin{align*}
(A, BC |S) \in \mathcal{L}&\Rightarrow_{(L)} (a,b|S^\prime) \in \mathcal{L} , \ \forall a \in A, b \in BC, S \subseteq S^\prime \subseteq SABC\backslash ab \\ 
&\Rightarrow \ \ \ \ (a, b | S^\prime) \in \mathcal{L}, \ \forall a \in A, b \in B, SC\subseteq S^\prime \subseteq SABC \backslash ab \\
&\Rightarrow_{(L)} (A, B | SC) \in \mathcal{L}, 
\end{align*}
and so $\mathcal{L}$ is closed under weak union $(U)$.

It remains to show by counter example that $\mathcal{L}$ need not be closed under contraction. Consider the relation $\mathcal{L} = \{(a, b | \emptyset), (a, c | b) \}$ which is closed under localizability $(L)$. However, $\mathcal{L}$ is not closed under contraction, as it does not contain $(a, bc | \emptyset)$. 
\end{proof}
Localizability and its relation to the semigraphoid axioms is considered in more detail in Section \ref{sec:semi_and_pseudo}, when we consider localizability and the pseudographoid rules in detail. For more details on the rich mathematical framework underlying $\mathcal{T}$ and its subfamilies, we refer the reader to \cite{Lnenicka2007}, \cite{Vomlel2007}, \cite{Studeny1997, Studeny2001} and \cite{Matus1992a, Matus1997}.

\subsection{Graphical Models}
\label{subsec:graphical_models}

Let $\mathcal{G} = (V,E)$ be an undirected graph with vertex set $V$ and edge set $E \subseteq V \times V$ such that $(u,v) \in E \iff (v,u) \in E$. Two distinct vertices $v,u \in V$ are said to be \emph{adjacent} in $\mathcal{G}$ if $(v,u) \in E$ and in this case we write $u\sim_\mathcal{G} v$ or simply $u\sim v$ when it is unambiguous. The vertices $u,v$ are said to be \emph{connected} in $\mathcal{G}$ if there exists some $u = w_1,\dots,w_{n+1} = v \in V$, with $w_i$ distinct, such that $w_i \sim w_{i+1}, \forall i \in 1,\dots,n$. In this case, $(w_1, w_2, \dots, w_{n+1})$ is said to be a \emph{path} connecting $v = w_1$ and $u = w_{n+1}$ in $\mathcal{G}$. We define $\mathcal{P}(u,v)$ as the set of all paths connecting $u$ and $v$. 

Given an undirected graph $\mathcal{G} = (V,E)$ and disjoint $A,B,S \subseteq V$, we say that $S$ \emph{separates} $A$ and $B$ in $\mathcal{G}$ if $\forall a\in A, b \in B$, any path in $\mathcal{P}(a,b)$ contains at least one element of $S$. In this case, we write
\begin{align*}
A \perp_\mathcal{G} B | S,
\end{align*}
and define $[\mathcal{G}] \subseteq \mathcal{T}$ by $[\mathcal{G}] := \{(A,B|S) \ : \ A \perp_\mathcal{G} B | S\}$. For any undirected graph $\mathcal{G}$, $[\mathcal{G}]$ is a semigraphoid closed under intersection and composition \citep{Pearl1986}. It follows that $[\mathcal{G}]$ is localizable; for a direct proof of this statement, see the appendix. For a further discussion of $[\mathcal{G}]$ in a larger context, see \citet{Matus1997}, \citet{Lnenicka2007} and \citet{Matus1992a}. 

Given a relation $\mathcal{L}$, we construct the undirected graph corresponding to $\mathcal{L}$, written as $\mathcal{G}_{un}(\mathcal{L}) = (V, E_{un}(\mathcal{L}))$, according to the rule
\begin{align*}
(a,b) \not\in E_{un}(\mathcal{L}) \iff (a,b|V\backslash ab) \in \mathcal{L}. 
\end{align*}
The \emph{bidirected graph} corresponding to $\mathcal{L}$ is the graph $\mathcal{G}_{bi}(\mathcal{L}) = (V,E_{bi}(\mathcal{L}))$ constructed according to the rule
\begin{align*}
(a,b) \not\in E_{bi}(\mathcal{L}) \iff (a,b|\emptyset) \in \mathcal{L}. 
\end{align*}
Note that each edge of $\mathcal{G}_{bi}(\mathcal{L})$ can be thought of as a bidirected edge. We use this convention to be consistent with previous work on directed graphs, and adhere to the notation of \citet{Lauritzen1996}. Some authors refer to the undirected graph above as the \emph{concentration graph} and the bidirected graph as the \emph{covariance graph} \citep{Cox1996}. This terminology makes sense when modelling Gaussian distributions, for which pairwise partial independence between random variables is encoded by sparsity in the inverse covariance (concentration) matrix. Since the context of this paper is much broader, we use the undirected/bidirected nomenclature. 

\begin{example}
Typically, the relation $\mathcal{L}$ being considered is the conditional independence structure for some random variable $\mathbf{X}$. The undirected and bidirected graphs corresponding to $\mathbf{X}$ are constructed using pairwise partial or marginal independences present in $[\mathbf{X}]$. Specifically, the undirected graph $\mathcal{G}_{un}([\mathbf{X}]) = (V,E_{un}([\mathbf{X}]))$ satisfies $(a,b) \not\in E_{un}([\mathbf{X}]) \iff X_a \ci X_b | \mathbf{X}_ {V\backslash ab}$. Alternatively, $(a,b) \in E_{un}([\mathbf{X}]) \iff X_a \not\ci X_b |\mathbf{X}_ {V\backslash ab}$.  On the other hand, the bidirected graph $\mathcal{G}_{bi}([\mathbf{X}]) = (V,E_{bi}([\mathbf{X}]))$ satisfies $(a,b) \in E_{bi}([\mathbf{X}]) \iff X_a \ci X_b$.
\end{example}

Such graphs have been widely studied going back to \cite{Cox1993}, \cite{Kauermann1996}, and \cite{Pearl1986}. For general reference on both undirected and bidirected graphs, as well as other types of graphical models, see for example \cite{Cox1996}, \cite{Lauritzen1996}, \cite{Studeny2001}, and \cite{WhittakerBook}.

\subsection{Global Markov Properties}
\label{subsec:markov_props}

Under certain assumptions, pairwise statements about graphical models can be used to construct global statements. Given a relation $\mathcal{L}$ and graph $\mathcal{G} = (V,E)$, we say that $\mathcal{L}$ is:
\begin{itemize}
\item[a.] \emph{Undirected (concentration) pairwise Markov} with respect to $\mathcal{G}$ if $(a,b) \notin E \Rightarrow (a,b|V\backslash ab) \in \mathcal{L}$, that is, $\mathcal{S}(\mathcal{L}) \subseteq [\mathcal{G}]$.  
\item[b.] \emph{Undirected global Markov} with respect to $\mathcal{G}$ if $A \perp_\mathcal{G} B | S \Rightarrow (A,B|S) \in \mathcal{L}$, that is, $[\mathcal{G}] \subseteq \mathcal{L}$.  
\item[c.] \emph{Bidirected (covariance) pairwise Markov} with respect to $\mathcal{G}$ if $(a,b)\notin E \Rightarrow (a,b|\emptyset) \in \mathcal{L}$. 
\item[d.] \emph{Bidirected global Markov} with respect to $\mathcal{G}$ if $A \perp_{\mathcal{G}} B | S \Rightarrow (A,B| V\backslash ABS)  = (A,B|S)^\rceil \in \mathcal{L}$. 
\end{itemize}

Note that we speak of Markov properties with respect to a relation $\mathcal{L}$ as compared to a random variable $\mathbf{X}$. In the language of conditional independence, the undirected global Markov rule, e.g., is the statement that
\begin{align*}
A \perp_\mathcal{G} B | S \Rightarrow \mathbf{X}_A \ci \mathbf{X}_B | \mathbf{X}_S. 
\end{align*}

Fixing some arbitrary $\mathcal{G} = (V,E)$, note that if $\mathcal{L}$ is undirected  pairwise Markov with respect $\mathcal{G}$ then $E_{un}(\mathcal{L}) \subseteq E $; $\mathcal{G}_{un}(\mathcal{L})$ is minimal in this sense. Further, if $\mathcal{G}^\prime = (V, E^\prime)$ satisfies $E_{un}([\mathbf{X}]) \subseteq E^\prime$, then $\mathcal{L}$ is undirected pairwise Markov with respect to $\mathcal{G}^\prime$. Similarly, if $\mathcal{L}$ is bidirected pairwise Markov with respect to $\mathcal{G}$, then $E_{bi}(\mathcal{L}) \subseteq E$. If $E_{bi}([\mathbf{X}]) \subseteq E^\prime$, then $\mathcal{L}$ is bidirected pairwise Markov with respect to $\mathcal{G}^\prime$.

It is well-known \citep{Pearl1986} that closure of a relation $\mathcal{L}$ under intersection is a sufficient condition for equivalence between the undirected pairwise and undirected global Markov properties. The same is true \citep{Kauermann1996,Banerjee2003} for relations closed under composition with respect to the bidirected Markov properties. In Section \ref{sec:pseudo_global}, the assumptions of intersection and composition will be weakened. In addition, the concept of duality will be used to demonstrate how the result on bidirected graphs follows directly from that on undirected graphs.

\subsection{Additional Closure Rules}
\label{subsec:pseudographoids}

We now present four additional closure rules on $\mathcal{L} \subseteq \mathcal{T}$. First, we define the pseudographoid and reverse pseudographoid rules. Pseudographoids have been previously studied going back at least to \citet{Matus1997}, and are defined here in their pairwise form as in \citet{Lnenicka2007}. The reverse pseudographoid rule is also considered by \citet{Lnenicka2007}. 

\begin{itemize}
\item[8.] Pseudographoid Rule $(P)$: $(a,b|Sc), (a,c|Sb) \in \mathcal{L} \Rightarrow (a,b|S), (a,c|S) \in \mathcal{L}$, where $a,b,c$ are singletons. 
\item[9.] Reverse Pseudographoid Rule $(R)$: $(a,b|S), (a,c|S) \in \mathcal{L} \Rightarrow (a,b|Sc), (a,c|Sb) \in \mathcal{L}$ where $a,b,c$ are singletons. 
\end{itemize}

Note that the pseudographoid rule is  analogous to intersection, but with some sets restricted to be singletons. On the other hand, the reverse pseudographoid rule is a weakened version of composition. In Section \ref{sec:pseudo_global}, these rules are shown to be both necessary sufficient for equivalence of the Markov properties (Section \ref{subsec:graphical_models}). In Section \ref{sec:semi_and_pseudo}, the pseudographoid and reverse pseudographoid rules are studied with respect to semigraphoids, and shown to be equivalent to intersection (respectively, composition) in that context. 

In the sequel, the following two closure properties will be used to study faithfulness (see Section \ref{sec:faithful_trees}) of graphical models:
\begin{itemize}
\item[10.] Decomposable Transitivity: $(aB,De|c), \ (a,e|BD) \in \mathcal{L} \Rightarrow (a,c|B) \in \mathcal{L} \lor (c,e|D) \in \mathcal{L}$. 
\item[11.] Dual Decomposable Transitivity: $(aB,De|V\backslash aBcDe), \ (a,e|V\backslash aBDe) \in \mathcal{L} \Rightarrow (a,c|V\backslash aBc)\in \mathcal{L} \lor (c,e|V\backslash cDe) \in \mathcal{L}$. 
\end{itemize}
Decomposable transitivity was first defined in \citet{Becker2005} to provide conditions for faithfulness of random variables to tree graphs. Dual decomposability is a new notion introduced in this paper to facilitate derivation of a dual result to that of \citet{Becker2005} in Section \ref{sec:faithful_trees}.

\section{Duality}
\label{sec:duality}

We now introduce the notion of duality, a tool which we rely heavily on in the sequel. Given some triple $(A,B|C) \in \mathcal{T}$, define its \emph{dual} by $(A,B|C)^\rceil :=(A,B|V \backslash ABC)$. Similarly, given a relation $\mathcal{L} \subseteq \mathcal{T}$, its dual is defined to be 
\begin{align*}
{L}^\rceil  := \{(A,B|C) \ : \ (A,B|V\backslash ABC)\in\mathcal{L} \}.
\end{align*}
Note that $(\mathcal{L}^\rceil)^{\rceil} = \mathcal{L}$. Duality is discussed in detail in \cite{Matus1992a, Lnenicka2007}, in the context of various classes of relations.

Consider the undirected and bidirected graphs corresponding to a given relation $\mathcal{L}$, as defined in Section \ref{subsec:graphical_models}. Note that $(a,b|V\backslash\{a,b\})^\rceil = (a,b|\emptyset)$, and hence the undirected graph  corresponding to $\mathcal{L}$ is the dual of the bidirected graph (and vice versa), that is $[\mathcal{G}_{un}]^\rceil = [\mathcal{G}_{bi}]$. Furthermore, the Markov properties for bidirected graphs are simply defined using duality. A relation $\mathcal{L}$ is bidirected pairwise Markov with respect to some $\mathcal{G} = (V, E)$ if $\mathcal{S}(\mathcal{L})^\rceil \subseteq [\mathcal{G}]$, and bidirected global Markov with respect to $\mathcal{G}$ if $[\mathcal{G}]^\rceil \subseteq \mathcal{L}$. 

We now define a sense in which one can ``dualize" a result regarding undirected graphs to one about bidirected graphs. Let $\mathcal{L} \subseteq \mathcal{T}$ be a relation. As in \cite{Studeny1997}, we define a \emph{rule with $r$ antecedents}, $\mathcal{R}$, as a set of $(r+1)$-tuples of $V$  with each $t_i$ a distinct triple on $V$. We say that a relation $\mathcal{L}$ is closed under $\mathcal{R}$ if for every $(r+1)$-tuple $\{t_1, \dots, t_{r+1}\} \in \mathcal{R}$, it holds that
\begin{align*}
t_i \in \mathcal{L},\ i = 1,\dots r \Rightarrow t_{r+1} \in \mathcal{L}. 
\end{align*} 
For example, the intersection rule is the set of all $3$-tuples of the form 
\[
\{(A,B|CD), (A,C|BD), (A,BC|D)\},
\] 
where $A,B,C,D$ are arbitrary disjoint subsets of $V$. We will also allow for $r=0$, and in this case $\mathcal{L}$ is closed under the unary rule $\mathcal{R}$ when 
\begin{align*}
\{t_1\} \in \mathcal{R} \Rightarrow t_1 \in \mathcal{L}. 
\end{align*}
which is simply set-containment. 

The dual of a rule $\mathcal{R}$ is defined as
\begin{align*}
\{ \{t_1^\rceil, \dots, t_{r+1}^\rceil \} \ : \ \{t_1, \dots, t_{r+1} \} \in \mathcal{R} \}.
\end{align*}
Noting that
\begin{align*}
\{(A,B|CD), (A,C|BD), (A,BC|D)\}^\rceil &= \{(A,B|V\backslash ABCD), (A,C|V\backslash ABCD), (A,BC|V\backslash ABCD)\},
\end{align*}
it is easy to see that the dual of the intersection rule is composition, i.e., the set of all triples of the form
\[
\{(A,B|D), (A,C|D), (A,BC|D) \}
\]
after the change of variables $V\backslash ABCD \rightarrow D$. 

Note that the closure of a relation $\mathcal{L}$ under some rule $\mathcal{R}$ is equivalent to closure of $\mathcal{L}^\rceil$ under $\mathcal{R}^\rceil$. Similarly, suppose it holds that whenever a relation $\mathcal{L}$ is closed under some finite set of rules, $\{\mathcal{R}_k\}_{k=1}^K$, it must also be closed under $\mathcal{R}_{K+1}$. Then it immediately holds that whenever a relation $\mathcal{L}$ is closed under $\{\mathcal{R}^\rceil_k\}_{k=1}^K$, it must also be closed under $\mathcal{R}^\rceil_{K+1}$. Otherwise, there would exist some $\mathcal{L}$ which was closed under $\{\mathcal{R}^\rceil_k\}_{k=1}^K$ but not under $\mathcal{R}^\rceil_{K+1}$. In this case, $\mathcal{L}^\rceil$ would be closed under $\{\mathcal{R}_k\}_{k=1}^K$ but not under $\mathcal{R}_{K+1}$, contradicting the assumption.

For example, for a given graph $\mathcal{G}$, consider the unary rules
\begin{align*}
\mathcal{R}_{\mathcal{G}}^{un,pair} = \{(a,b|V\backslash ab) \ : \ a \perp_\mathcal{G} b | V\backslash ab\}
\end{align*}
and
\begin{align*}
(\mathcal{R}_{\mathcal{G}}^{un,pair})^\rceil &= \{(a,b|\emptyset) \ : \ a \perp_\mathcal{G} b | S\} \\
&:= \mathcal{R}_{\mathcal{G}}^{bi,pair}.
\end{align*}
These are the pairwise undirected and pairwise bidirected Markov properties of Section \ref{subsec:markov_props}; they are jointly dual. Similarly, the global Markov properties are given by
\begin{align*}
\mathcal{R}_{\mathcal{G}}^{un,global} = \{(A,B|S) \ : \ A \perp_\mathcal{G} B | S\}
\end{align*}
and
\begin{align*}
(\mathcal{R}_{\mathcal{G}}^{un,global})^\rceil &= \{(A,B|V\backslash ABS) \ : \ A \perp_\mathcal{G} B | S\} \\
&:= \mathcal{R}_{\mathcal{G}}^{bi,global}.
\end{align*}
Hence, the existence of a result providing equivalence between the undirected pairwise and undirected global Markov rules under the assumption of some closure rule yields the dual result, i.e., equivalence between the bidirected pairwise and bidirected global Markov rules under the dual closure rule. Although we will use the technique described above in the sequel, we will not require the preceding notation regarding rules.

Before proceeding, we provide three lemmas which will be used in the remainder of this paper. The following lemma specifies the effect of the dualization operator on relations with respect to various closure rules.
\begin{lemma}
\label{lem:dual_operator}
A relation $\mathcal{L} \subseteq \mathcal{T}(V)$ is:
\begin{enumerate}
\item localizable if and only if $\mathcal{L}^\rceil$ is localizable. 
\item a semigraphoid if and only if $\mathcal{L}^\rceil$ is a semigraphoid.
\item closed under intersection if and only if $\mathcal{L}^\rceil$ is closed under composition. 
\item closed under the pseudographoid rule if and only if $\mathcal{L}^\rceil$ is closed under the reverse pseudographoid rule. 
\item closed under decomposable transitivity if and only if $\mathcal{L}^\rceil$ is closed under dual decomposable transitivity.
\end{enumerate}
\end{lemma}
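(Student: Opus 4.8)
The unifying device is the observation recorded just before the statement: for any triple $t$ one has $t \in \mathcal{L} \iff t^\rceil \in \mathcal{L}^\rceil$, the operator $(\cdot)^\rceil$ is an involution, and consequently closure of $\mathcal{L}$ under a rule $\mathcal{R}$ is equivalent to closure of $\mathcal{L}^\rceil$ under the dualized rule $\mathcal{R}^\rceil$. The plan is therefore to treat each item by writing down the rule defining the left-hand property, dualizing every triple appearing in it, and checking that the resulting rule is exactly the named right-hand property, typically after relabelling the conditioning set. Because $(\cdot)^\rceil$ is an involution, each ``if and only if'' then follows from its forward direction alone.

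Items (3), (4), and (5) are essentially immediate. For (3) the computation that the dual of intersection is composition has already been carried out in the text, so only a reference is needed. For (4) I would dualize the two antecedents and two consequents of the pseudographoid rule $(P)$, using $(a,b|Sc)^\rceil = (a,b|V\backslash abcS)$, $(a,c|Sb)^\rceil = (a,c|V\backslash abcS)$, $(a,b|S)^\rceil = (a,b|V\backslash abS)$ and $(a,c|S)^\rceil = (a,c|V\backslash acS)$; setting $S^\ast := V\backslash abcS$ so that $V\backslash abS = S^\ast c$ and $V\backslash acS = S^\ast b$, the dualized rule is verbatim the reverse pseudographoid rule $(R)$. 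For (5) the matter is purely definitional: dualizing each of the four triples of decomposable transitivity produces precisely the triples appearing in dual decomposable transitivity, which was introduced for exactly this purpose.

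Item (2) requires showing that the collection of semigraphoid rules is stable under dualization. The key computations are that symmetry and contraction are self-dual while decomposition and weak union are dual to one another, $(D)^\rceil = (U)$ and $(U)^\rceil = (D)$. Each is a short relabelling; for instance dualizing weak union $(A,BC|S)\Rightarrow (A,B|SC)$ gives $(A,BC|V\backslash ABCS)\Rightarrow(A,B|V\backslash ABCS)$, which upon writing $S^\ast := V\backslash ABCS$ is exactly decomposition, and dualizing contraction returns contraction after the relabelling $B\leftrightarrow C$. Since dualization permutes the four rules among themselves, $\mathcal{L}$ is closed under all of them iff $\mathcal{L}^\rceil$ is, which is the claim; alternatively one may dualize the parsimonious form $(DUC)$ of Lemma~\ref{lem:semi_parsim} together with $(S)$.

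The main obstacle is item (1), because localizability is not a single implication with a fixed number of antecedents but a biconditional quantified over the interval $S \subseteq S' \subseteq SAB\backslash ab$ of conditioning sets. The plan is to apply $t\in\mathcal{L}\iff t^\rceil\in\mathcal{L}^\rceil$ termwise and then verify that dualization carries this interval correctly onto the analogous interval for $\mathcal{L}$. Concretely, writing $S := V\backslash ABT$ for the dual of the conditioning set $T$ of the global triple, I would check that the map $T' \mapsto V\backslash abT'$ is an order-reversing bijection from $[\,T,\ TAB\backslash ab\,]$ onto $[\,S,\ SAB\backslash ab\,]$, the endpoints $T'=T$ and $T'=TAB\backslash ab$ mapping to $S'=SAB\backslash ab$ and $S'=S$ respectively. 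Once this bijection is in hand, the quantified condition defining localizability of $\mathcal{L}^\rceil$ transforms exactly into that defining localizability of $\mathcal{L}$, and the involution supplies the equivalence. Confirming that complementation genuinely reverses the inclusion order on the relevant conditioning sets is the only delicate bookkeeping in the proof.
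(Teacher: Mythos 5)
Your proposal is correct, and essentially every computation in it appears somewhere in the paper's proof; the genuine difference is one of organization. The paper proves each item by an element chase: take a triple in $\mathcal{L}^\rceil$, translate it into $\mathcal{L}$ via the definition of the dual, apply the assumed closure property of $\mathcal{L}$, and translate back, doing this separately for both implications in items (3) and (4) and for both directions of the localizability biconditional in item (1). You instead dualize the rule schemas themselves and invoke the principle from Section~\ref{sec:duality} that $\mathcal{L}$ is closed under a rule $\mathcal{R}$ if and only if $\mathcal{L}^\rceil$ is closed under $\mathcal{R}^\rceil$ --- a principle the paper states but never formally deploys in the proof of this lemma. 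The clearest divergence is item (2): the paper verifies the parsimonious $(DUC)$ characterization of Lemma~\ref{lem:semi_parsim} for $\mathcal{L}^\rceil$ using $(D)$, $(U)$ and $(C)$ of $\mathcal{L}$, whereas you observe that dualization permutes the semigraphoid axioms, $(D)^\rceil = (U)$ and $(U)^\rceil = (D)$ with $(S)$ and $(C)$ self-dual up to swapping $B$ and $C$; this structural fact is nowhere explicit in the paper and makes the claim transparent. What your route buys is modularity and the biconditionals essentially for free; what it costs is the obligation, which you correctly discharge, to check that schema dualization is a bijection on instances (the change of variables $S \mapsto V\backslash Sabc$, and in item (1) the order-reversing bijection between the intervals $[\,T,\ TAB\backslash ab\,]$ and $[\,S,\ SAB\backslash ab\,]$ --- that last piece of bookkeeping is literally the paper's displayed implication $S\subseteq S'\subseteq SAB\backslash ab \Rightarrow V\backslash SAB \subseteq V\backslash S'ab \subseteq V\backslash Sab$, so on item (1) the two proofs coincide). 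One small caution: your remark that each ``if and only if'' follows from the forward direction alone is accurate as stated only for items (1) and (2), where the property is the same on both sides; for items (3)--(5) the converse additionally uses the involution at the level of rules, $(\mathcal{R}^\rceil)^\rceil = \mathcal{R}$, which your framework supplies but which the sentence glosses over. This is a phrasing issue, not a gap.
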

\begin{proof}
$(1):$ It is clear by definition that $(\mathcal{L}^\rceil)^\rceil = \mathcal{L}$, therefore it suffices to show one direction. Therefore, assume that $\mathcal{L}$ is localizable; we will show that $\mathcal{L}^\rceil$ is as well.

We first show the $(\Rightarrow)$ direction of the localizability condition. Assume that $(A,B|S) \in \mathcal{L}^\rceil$, and consider any singletons $a\subseteq A, b\subseteq B$, and $S^\prime$ satisfying $S \subseteq S^\prime \subseteq SAB \backslash ab$.  We are done if $(a,b|S^\prime)  \in \mathcal{L}^\rceil$, which is equivalent to showing that $(a,b|V\backslash S^\prime ab) \in \mathcal{L}$. Since $(A,B|S) \in \mathcal{L}^\rceil \Rightarrow (A,B|V\backslash SAB) \in \mathcal{L}$, we have that for any $V\backslash SAB \subseteq \tilde{S} \subseteq V\backslash Sab$, $(a,b|\tilde{S}) \in \mathcal{L}$ by localizability of $\mathcal{L}$. But $S \subseteq S^\prime \subseteq SAB\backslash ab \Rightarrow V\backslash SAB \subseteq V\backslash S^\prime ab \subseteq V\backslash Sab$, and hence $(a,b|V\backslash S^\prime ab)$ as required. 

To show the $(\Leftarrow)$ direction, consider pairwise disjoint $A,B,S\subseteq V$, and assume that $(a,b|S^\prime) \in \mathcal{L}^\rceil$ for any singletons $a\subseteq A, b\subseteq B$ and $S\subseteq S^\prime \subseteq SAB\backslash ab$. Therefore, for any such $a,b,S^\prime$, it holds that $(a,b|V\backslash S^\prime ab) \in \mathcal{L}$. Hence, for any $V\backslash SAB \subseteq \tilde{S} \subseteq V\backslash Sab$, which implies by localizability of $\mathcal{L}$ that $(A,B| V\backslash SAB) \in \mathcal{L}$. Therefore, $(A,B|V\backslash SAB)^\rceil = (A,B|S) \in \mathcal{L}^\rceil$, completing the proof of $(1)$. 

$(2):$ Let $\mathcal{L}$ be a semigraphoid and consider $(A,BC|S) \in \mathcal{L}^\rceil$. Then $(A,BC|V\backslash SABC) \in \mathcal{L}$ which implies that $(A,C|V\backslash SABC), (A,B|V\backslash SAB) \in \mathcal{L}$ by decomposition and weak union, respectively. However, this in turn yields $(A,B|S), (A,C|SB) \in \mathcal{L}^\rceil$. Next, let $(A,B|S), (A,C|SB) \in \mathcal{L}^\rceil$ such that $(A,B|V\backslash SAB), (A,C|V\backslash SABC) \in \mathcal{L}$. By contraction, it follows that $(A,BC|V\backslash SABC) \in \mathcal{L}$, which implies that $(A,BC|S) \in \mathcal{L}^\rceil$, and hence $\mathcal{L}^\rceil$ is a semigraphoid. The reverse direction follows from the fact that $(\mathcal{L}^\rceil)^\rceil = \mathcal{L}$, completing the proof of $(2)$. 

$(3):$ Let $\mathcal{L}$ be a closed under intersection, and consider $(A,B|S), (A,C|S) \in \mathcal{L}^\rceil$. Then 
\[
(A,B|V\backslash SAB), (A, C|V\backslash SAC) \in \mathcal{L},
\]
and as $\mathcal{L}$ is closed under intersection this yields that $(A,BC|V\backslash SABC) \in \mathcal{L}$. It follows that $(A,BC|S) \in \mathcal{L}^\rceil$, and hence $\mathcal{L}^\rceil$ is closed under composition. 

Next, assume that $\mathcal{L}^\rceil$ is closed under composition, and let $(A,B|SC), (A,C|SB) \in \mathcal{L}$. Then
\[
(A,B|V\backslash SABC), (A,C|V\backslash SABC) \in \mathcal{L}^\rceil. 
\]
This implies by composition that $(A,BC|V\backslash SABC) \in \mathcal{L}^\rceil$, which in turn yields $(A,BC|S)\in \mathcal{L}$, completing the proof of $(3)$. 

$(4):$ Assume first that $\mathcal{L}$ is closed under $(P)$, and for pairwise disjoint $a,b,c,S \subseteq V$ with $a,b,c$ singletons let $(a,b|S), (a,c|S) \in \mathcal{L}^\rceil$. It follows that 
\begin{align*}
(a,b|V \backslash Sab), (a,c|V\backslash Sac) \in \mathcal{L}.
\end{align*}
Then by $(P)$, it also holds that 
\begin{align*}
(a,b| V\backslash Sabc), (a,c| V\backslash Sabc) \in \mathcal{L}.
\end{align*}
This in turn implies that $(a,b|Sc), (a,c|Sb) \in \mathcal{L}^\rceil$ which shows that $\mathcal{L}^\rceil$ is closed under $(R)$. 

Next assume that $\mathcal{L}^\rceil$ is closed under $(R)$, and $a,b,c,S$ as before let $(a,b|Sc), (a,c|Sb) \in \mathcal{L}$. Then 
\begin{align*}
(a,b|V\backslash Sabc), (a,c|V\backslash Sabc) \in \mathcal{L}^\rceil,
\end{align*}
and by $(R)$ it follows that
\begin{align*}
(a,b| V\backslash Sab), (a,c|V\backslash Sac) \in \mathcal{L}^\rceil.
\end{align*}
This implies that $(a,b|S), (a,c|S) \in \mathcal{L}$, and hence $\mathcal{L}$ is closed under $(P)$, completing the proof of $(4)$. 

$(5):$ This follows directly from the definition of dual decomposable transitivity. 
\end{proof}
Lemma \ref{lem:dual_operator} will greatly expedite proofs regarding undirected and bidirected graphs in the sequel. The following lemma from \citet{Lnenicka2007} allows dualization of any Gaussian random vector.
\begin{lemma}[Lemma 1 of \citet{Lnenicka2007}]
\label{lem:gaussian_duality}
Let $\mathbf{X}$ be a Gaussian random vector distributed as $\mathcal{N}(0,\Sigma)$. If $\mathbf{Y}$ is distributed as $\mathcal{N}(0,\Sigma^{-1})$, then $[\mathbf{Y}] = [\mathbf{X}]^\rceil$.
\end{lemma}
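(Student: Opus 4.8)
The plan is to reduce the claimed identity $[\mathbf{Y}] = [\mathbf{X}]^\rceil$ to a statement about pairwise triples and then settle it by a single Schur-complement computation. Since $\mathbf{X}$ and $\mathbf{Y}$ are random vectors, both $[\mathbf{X}]$ and $[\mathbf{Y}]$ are semigraphoids, and hence localizable by Lemma \ref{lem:semi_local_comparison}; by part $(1)$ of Lemma \ref{lem:dual_operator}, $[\mathbf{X}]^\rceil$ is localizable as well. Two localizable relations coincide as soon as they contain the same pairwise triples $(a,b|C) \in \mathcal{S}$, because localizability expresses membership of an arbitrary triple purely in terms of its pairwise ``localizations.'' Thus it suffices to prove, for every pair of distinct singletons $a,b$ and every $C \subseteq V\backslash ab$, that $(a,b|C) \in [\mathbf{Y}]$ iff $(a,b|C) \in [\mathbf{X}]^\rceil$. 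Writing $D := V\backslash abC$ and unwinding the definition of the dual, the right-hand membership reads $(a,b|D) \in [\mathbf{X}]$, so the whole lemma reduces to the single equivalence
\[
a \ci_{\mathbf{Y}} b \mid C \iff a \ci_{\mathbf{X}} b \mid D .
\]

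To establish this I would invoke two standard characterizations of Gaussian conditional independence, joined by a block-inversion identity. First, within the subvector $\mathbf{Y}_{abC}$ --- which is Gaussian with covariance $K_{abC,abC}$, where $K := \Sigma^{-1}$ is the covariance of $\mathbf{Y}$ --- pairwise conditional independence given all the remaining coordinates (namely $C$) is equivalent to a vanishing entry of the precision matrix: $a \ci_{\mathbf{Y}} b \mid C$ iff $\big[(K_{abC,abC})^{-1}\big]_{ab} = 0$. Second, in $\mathbf{X}$ the statement $a \ci_{\mathbf{X}} b \mid D$ is equivalent to the vanishing of the partial covariance $\Sigma_{ab} - \Sigma_{aD}\Sigma_{DD}^{-1}\Sigma_{Db} = 0$, since for jointly Gaussian scalars zero partial covariance is the same as conditional independence. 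The bridge is the Schur-complement identity obtained by partitioning $K$ into the blocks indexed by $abC$ and $D$ and using $K^{-1} = \Sigma$:
\[
(K_{abC,abC})^{-1} = \Sigma_{abC,abC} - \Sigma_{abC,D}\,\Sigma_{DD}^{-1}\,\Sigma_{D,abC},
\]
whose $(a,b)$ entry is exactly the partial covariance above. Here the key point is that $\Sigma$ is simultaneously the covariance of $\mathbf{X}$ and the precision of $\mathbf{Y}$, so the precision entry for $\mathbf{Y}$ and the partial covariance for $\mathbf{X}$ are literally the same number, and the two conditional-independence statements coincide.

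The conceptual content of the bridge is that \emph{conditioning in $\mathbf{X}$ corresponds to marginalizing in $\mathbf{Y}$, and vice versa}: conditioning $\mathbf{X}$ on $\mathbf{X}_D$ yields a Gaussian on $abC$ whose covariance is the Schur complement displayed above, while marginalizing $\mathbf{Y}$ to $abC$ yields a Gaussian whose precision is that same Schur complement. I expect the main obstacle to be purely bookkeeping --- keeping the four index blocks $a,b,C,D$ straight and citing the Schur-complement identity in the correct orientation (inverse of a covariance submatrix equals a Schur complement of the corresponding precision matrix). No analytic difficulty arises once the reduction to pairwise triples via localizability is in place; one could instead argue directly on general blocks $(A,B|C)$ by applying the same identity to the whole $A,B$ cross-block, but the localizability reduction keeps the algebra one-dimensional and is the cleaner route.
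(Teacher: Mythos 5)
Your proof is correct, but there is no proof in the paper to compare it against: the lemma is imported verbatim as Lemma 1 of \citet{Lnenicka2007} and used as a black box, so your argument supplies content the paper delegates to the literature. Your route is sound on every step. The reduction to pairwise triples is valid because $[\mathbf{X}]$ and $[\mathbf{Y}]$, being conditional independence structures of random vectors, are semigraphoids and hence localizable (Lemma \ref{lem:semi_local_comparison}), $[\mathbf{X}]^\rceil$ is localizable by part (1) of Lemma \ref{lem:dual_operator}, and the localizability condition $(L)$ is an equivalence whose right-hand side involves only pairwise triples, so two localizable relations with identical pairwise parts coincide. The pairwise equivalence between $Y_a \ci Y_b \mid \mathbf{Y}_C$ and $X_a \ci X_b \mid \mathbf{X}_D$, where $D = V\backslash abC$, then follows exactly as you say: since $\Sigma$ is an invertible covariance matrix it is positive definite, so all principal submatrices of $\Sigma$ and of $K=\Sigma^{-1}$ are invertible, the zero-precision-entry characterization applies to the nondegenerate Gaussian $\mathbf{Y}_{abC}$, the zero-partial-covariance characterization applies to $\mathbf{X}$, and the block-inversion identity
\begin{align*}
(K_{abC,abC})^{-1} = \Sigma_{abC,abC} - \Sigma_{abC,D}\,\Sigma_{DD}^{-1}\,\Sigma_{D,abC}
\end{align*}
is stated in the correct orientation (the inverse of a principal block of $K$ equals the Schur complement in $K^{-1}=\Sigma$ with respect to the complementary block). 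This conditioning/marginalization exchange is precisely the mechanism behind the cited result, so your proof is a faithful, self-contained replacement for the citation; if anything, one could shorten it by running the same Schur-complement computation on general blocks $(A,B|C)$ and skipping the localizability reduction, but as you note the pairwise version keeps the linear algebra scalar and is the cleaner bookkeeping.
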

In particular, application of Lemma \ref{lem:gaussian_duality} yields the following result.
\begin{lemma}
\label{lem:gaussian_dual_closure}
Assume that for any Gaussian random vector $\mathbf{X}$, it holds that $[\mathbf{X}]$ is closed under some rule $\mathcal{R}$. Then for any Gaussian random vector $\mathbf{X}$, it also holds that $[\mathbf{X}]$ is closed under $\mathcal{R}^\rceil$. 
\end{lemma}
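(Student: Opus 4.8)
The plan is to leverage Lemma \ref{lem:gaussian_duality}, which provides a concrete probabilistic realization of the abstract dualization operator within the Gaussian family. The key observation is that the hypothesis quantifies over \emph{all} Gaussian random vectors, so it suffices to produce, for an arbitrary Gaussian $\mathbf{X}$, a companion Gaussian vector whose conditional independence structure is precisely $[\mathbf{X}]^\rceil$; the assumed closure of that companion's structure under $\mathcal{R}$ will then transfer to a statement about $[\mathbf{X}]^\rceil$.

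First I would fix an arbitrary Gaussian random vector $\mathbf{X} \sim \mathcal{N}(0,\Sigma)$ and set out to show that $[\mathbf{X}]^\rceil$ is closed under $\mathcal{R}^\rceil$. By the general correspondence established in Section \ref{sec:duality}, closure of a relation $\mathcal{L}$ under $\mathcal{R}$ is equivalent to closure of $\mathcal{L}^\rceil$ under $\mathcal{R}^\rceil$; applying this with $\mathcal{L} = [\mathbf{X}]^\rceil$ (so that $\mathcal{L}^\rceil = [\mathbf{X}]$ by the involution property $(\mathcal{L}^\rceil)^\rceil = \mathcal{L}$), it suffices to show that $[\mathbf{X}]$ is closed under $\mathcal{R}$. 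But this is immediate from the hypothesis. Hence the cleaner route is to invoke the involution and the rule-dualization equivalence directly, without even needing Lemma \ref{lem:gaussian_duality}.

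However, the intended and more instructive argument uses Lemma \ref{lem:gaussian_duality} explicitly, so I would present that version: define $\mathbf{Y} \sim \mathcal{N}(0, \Sigma^{-1})$, which is a legitimate Gaussian random vector since $\Sigma^{-1}$ is again a valid positive-definite covariance matrix. By Lemma \ref{lem:gaussian_duality}, $[\mathbf{Y}] = [\mathbf{X}]^\rceil$. Because $\mathbf{Y}$ is itself Gaussian, the hypothesis applies to it, so $[\mathbf{Y}]$ is closed under $\mathcal{R}$. Substituting $[\mathbf{Y}] = [\mathbf{X}]^\rceil$, we conclude that $[\mathbf{X}]^\rceil$ is closed under $\mathcal{R}$. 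A final, short step converts this into the desired statement: since closure of $[\mathbf{X}]^\rceil$ under $\mathcal{R}$ is equivalent to closure of $([\mathbf{X}]^\rceil)^\rceil = [\mathbf{X}]$ under $\mathcal{R}^\rceil$, and we want the conclusion phrased for $[\mathbf{X}]$ under $\mathcal{R}^\rceil$, the two formulations match up via the involution.

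I do not anticipate a genuine obstacle here, as the lemma is essentially a bookkeeping consequence of the Gaussian duality of Lemma \ref{lem:gaussian_duality} together with the involutive nature of $(\cdot)^\rceil$. The only point requiring minor care is keeping the quantifiers straight: the power of the argument comes entirely from the fact that the hypothesis ranges over every Gaussian vector, which is what allows us to apply it to the dual vector $\mathbf{Y}$ rather than to $\mathbf{X}$ itself. The mild subtlety is ensuring that the final conclusion is stated for $[\mathbf{X}]$ under $\mathcal{R}^\rceil$ rather than for $[\mathbf{X}]^\rceil$ under $\mathcal{R}$; reconciling these two phrasings is exactly the rule-dualization equivalence recorded in Section \ref{sec:duality}, so no additional work is needed beyond invoking it.
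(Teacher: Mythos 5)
Your second, fully presented argument (fix $\mathbf{X}\sim\mathcal{N}(0,\Sigma)$, realize the dual structure as $[\mathbf{Y}]=[\mathbf{X}]^\rceil$ with $\mathbf{Y}\sim\mathcal{N}(0,\Sigma^{-1})$ via Lemma \ref{lem:gaussian_duality}, apply the hypothesis to $\mathbf{Y}$, and convert $[\mathbf{X}]^\rceil$ closed under $\mathcal{R}$ into $[\mathbf{X}]$ closed under $\mathcal{R}^\rceil$ by the rule-dualization equivalence) is correct and is essentially the paper's proof; the paper merely runs the same two ingredients as a proof by contradiction rather than directly, which is an immaterial difference. Your closing remark that the entire force of the argument comes from the hypothesis quantifying over \emph{all} Gaussian vectors, so that it can be applied to $\mathbf{Y}$, is exactly the right diagnosis.

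However, the ``cleaner route'' in your first paragraph, which you claim works ``without even needing Lemma \ref{lem:gaussian_duality},'' is wrong, and the error is worth pinning down because it contradicts your own closing remark. There you set out to prove that $[\mathbf{X}]^\rceil$ is closed under $\mathcal{R}^\rceil$ --- but that is not the lemma's conclusion; the conclusion is that $[\mathbf{X}]$ is closed under $\mathcal{R}^\rceil$. By the rule-dualization equivalence, ``$[\mathbf{X}]^\rceil$ closed under $\mathcal{R}^\rceil$'' is nothing more than a restatement of the hypothesis ``$[\mathbf{X}]$ closed under $\mathcal{R}$,'' so that route proves a tautology, not the lemma. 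To bridge the gap between ``$[\mathbf{X}]^\rceil$ is closed under $\mathcal{R}$'' and the desired statement about a \emph{Gaussian} structure, one must know that $[\mathbf{X}]^\rceil$ is itself the conditional independence structure of some Gaussian vector --- which is precisely the content of Lemma \ref{lem:gaussian_duality}, i.e., closure of the Gaussian family under dualization. Indeed, the lemma is false for families lacking this property: take $\mathcal{R}$ to be the intersection rule (so $\mathcal{R}^\rceil$ is composition) and replace ``Gaussian vectors'' by a family consisting of a single relation closed under intersection but not composition; the hypothesis holds, the conclusion fails, yet your ``cleaner route'' would apply verbatim. That paragraph should simply be deleted; the rest of your proposal stands.
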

\begin{proof}
Assume that there exists some Gaussian random vector $\mathbf{X}$ for which $[\mathbf{X}]$ is not closed under $\mathcal{R}^\rceil$. Consider then the Gaussian random vector $\mathbf{Y}$ with $[\mathbf{Y}] = [\mathbf{X}]^\rceil$, the existence of which is guaranteed by Lemma \ref{lem:gaussian_duality}. Since closure of a relation $\mathcal{L}$ under $\mathcal{R}$ is equivalent to closure of $\mathcal{L}^\rceil$ under $\mathcal{R}^\rceil$, it must be the case that $[\mathbf{X}]^\rceil = [\mathbf{Y}]$ is not closed under $(\mathcal{R}^\rceil)^\rceil = \mathcal{R}$. Then $\mathbf{Y}$ is a Gaussian random vector for which $[\mathbf{Y}]$ is not closed under $\mathcal{R}$, contradicting our assumption. 
\end{proof}
As we will demonstrate in Section \ref{sec:faithful_trees}, Lemmas \ref{lem:gaussian_duality} and \ref{lem:gaussian_dual_closure} allow dualization of closure rules specific to the Gaussian distribution in general.

\section{Duality, Pseudographoid Rules and the Global Markov Properties}
\label{sec:pseudo_global}

In this section, the pairwise and global Markov properties are examined using the dualization technique developed in Section \ref{sec:duality}. We provide weaker conditions for the equivalences between pairwise and global Markov properties. The typical assumptions for this equivalence are the semigraphoid rules in combination with intersection for undirected graphs or composition for bidirected graphs. Instead, we use localizability (shown to be weaker than the semigraphoid rules by Lemma \ref{lem:semi_local_comparison}) in combination with either the pseudographoid or reverse pseudographoid rule (which are weaker than intersection/composition).

We begin by considering this equivalence for undirected graphs, and proceed to dualize the result to bidirected graphs. The equivalence between the pairwise and global Markov properties for bidirected graphs is treated differently in the literature than that for undirected graphs, but as we will see, the results are equivalent in the dual sense described in Section \ref{sec:duality}. The full proof is also given for bidirected graphs for completeness, although dualization is a far more efficient method. We first restate the following well known results regarding the global Markov properties in the language of relations. 

\begin{thm}[\cite{Pearl1986}] 
\label{thm:global_conc}
Let $\mathcal{L} \subseteq \mathcal{T}(V)$ be a semigraphoid closed under intersection $(I)$. Then $\mathcal{L}$ is undirected pairwise Markov with respect to a graph $\mathcal{G} = (V,E)$ if and only if $\mathcal{L}$ is also undirected global Markov with respect to $\mathcal{G}$. 
\end{thm}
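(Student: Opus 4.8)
The statement has two implications, of which one is immediate. The plan is to dispatch the direction ``global $\Rightarrow$ pairwise'' first as a trivial specialization: if $(a,b)\notin E$ then $V\backslash ab$ separates $a$ and $b$ (any $a$--$b$ path must pass through a third vertex, which lies in $V\backslash ab$), so $a\perp_{\mathcal{G}} b\,|\,V\backslash ab$, and the global Markov property immediately yields $(a,b|V\backslash ab)\in\mathcal{L}$. No closure rules beyond the definitions are needed here.

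The substance is the converse, ``pairwise $\Rightarrow$ global'', i.e. showing $[\mathcal{G}]\subseteq\mathcal{L}$; this is where intersection enters. First I would reduce the global containment to a purely pairwise one. Since $\mathcal{L}$ is a semigraphoid it is localizable (Lemma \ref{lem:semi_local_comparison}), and $[\mathcal{G}]$ is localizable as well (as recorded in Section \ref{subsec:graphical_models}). Hence to prove $(A,B|S)\in\mathcal{L}$ whenever $(A,B|S)\in[\mathcal{G}]$, it suffices by $(L)$ to establish the singleton statements: for all $a\in A$, $b\in B$ and all $S\subseteq S'\subseteq SAB\backslash ab$ with $a\perp_{\mathcal{G}} b\,|\,S'$, one has $(a,b|S')\in\mathcal{L}$. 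Everything thus comes down to the core claim that separation of two vertices forces the corresponding pairwise triple into $\mathcal{L}$.

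To prove this core claim, that $a\perp_{\mathcal{G}} b\,|\,S'$ implies $(a,b|S')\in\mathcal{L}$ for every $S'\subseteq V\backslash ab$, I would use downward induction on $|S'|$. The base case $|S'|=p-2$ forces $S'=V\backslash ab$, so $a\perp_{\mathcal{G}} b\,|\,V\backslash ab$ means $(a,b)\notin E$ and the pairwise Markov property gives $(a,b|V\backslash ab)\in\mathcal{L}$ directly. For the inductive step, with $|S'|<p-2$ choose $d\in V\backslash(ab\cup S')$. Because $S'$ separates $a$ and $b$, they lie in different connected components of the subgraph induced on $V\backslash S'$, and $d$ can share a component with at most one of them; after relabelling assume $d$ is not connected to $b$ in that subgraph. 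Then $S'\cup a$ separates $d$ and $b$, while $S'\cup d$ separates $a$ and $b$, and both separators are strictly larger than $S'$, so the inductive hypothesis yields $(a,b|S'd)\in\mathcal{L}$ and $(d,b|S'a)\in\mathcal{L}$. Rewriting these by symmetry as $(b,a|S'd)$ and $(b,d|S'a)$ and applying the intersection rule with $A=b$, $B=a$, $C=d$ and conditioning set $S'$ gives $(b,ad|S')\in\mathcal{L}$, and decomposition followed by symmetry returns $(a,b|S')\in\mathcal{L}$, completing the induction.

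The main obstacle I anticipate is organizing this inductive step cleanly: one must verify that the component/separation bookkeeping actually produces the two separation statements needed (in particular the asymmetric ``$d$ not connected to $b$'' choice) and that both auxiliary conditioning sets strictly grow, so the induction is well-founded. I also note that the only use of intersection is through a single instance whose antecedents and conclusion are all pairwise (singleton) triples; this is exactly the content later abstracted as the pseudographoid rule $(P)$, which foreshadows the weakening carried out in Section \ref{sec:pseudo_global}. A reader inclined to avoid the localizability reduction could instead run a two-case downward induction directly on $(A,B|S)$, splitting on whether $A\cup B\cup S=V$, but the reduction above is more economical and better matches the relational framework of the paper.
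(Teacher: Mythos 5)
Your proposal is correct and takes essentially the same route as the paper's own argument (the appendix proof of the more general Theorem \ref{thm:pseudo_conc}, which the paper notes mirrors Pearl's proof of this theorem): downward induction on the size of the separating set, with the base case supplied by the pairwise Markov property, a single all-singleton instance of intersection in the inductive step, and localizability to lift the pairwise statements to general triples $(A,B|S)$. Your connected-component case analysis and your intersection instance with common element $b$ (followed by decomposition and symmetry) are just the mirror image of the paper's path-joining lemma and its direct pseudographoid step, so the two proofs agree up to cosmetic choices.
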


\begin{thm}[\cite{Kauermann1996}, \cite{Banerjee2003}]
\label{thm:global_cov}
Let $\mathcal{L} \subseteq \mathcal{T}(V)$ be a semigraphoid closed under composition $(M)$. Then $\mathcal{L}$ is bidirected pairwise Markov with respect to a graph $\mathcal{G} = (V,E)$ if and only if $\mathcal{L}$ is also bidirected global Markov with respect to $\mathcal{G}$. 
\end{thm}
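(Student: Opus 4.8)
The plan is to obtain Theorem~\ref{thm:global_cov} as the exact dual of Theorem~\ref{thm:global_conc}, rather than reproving it from scratch. The starting observation is that the hypotheses transfer cleanly under the dualization operator: if $\mathcal{L}$ is a semigraphoid closed under composition, then by Lemma~\ref{lem:dual_operator}(2) its dual $\mathcal{L}^\rceil$ is again a semigraphoid, and by Lemma~\ref{lem:dual_operator}(3), applied in the direction ``$\mathcal{L}^\rceil$ closed under intersection iff $(\mathcal{L}^\rceil)^\rceil=\mathcal{L}$ closed under composition'', the relation $\mathcal{L}^\rceil$ is closed under intersection. Thus $\mathcal{L}^\rceil$ satisfies precisely the hypotheses of Theorem~\ref{thm:global_conc}, which yields that $\mathcal{L}^\rceil$ is undirected pairwise Markov with respect to $\mathcal{G}$ if and only if it is undirected global Markov with respect to $\mathcal{G}$.

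It then remains to translate each of these two undirected statements about $\mathcal{L}^\rceil$ into the corresponding bidirected statement about $\mathcal{L}$. For the global property this is immediate: $\mathcal{L}^\rceil$ undirected global Markov means $[\mathcal{G}]\subseteq\mathcal{L}^\rceil$, and since the dual operator is an order-preserving involution on subsets of $\mathcal{T}(V)$ (if $\mathcal{A}\subseteq\mathcal{B}$ then $\mathcal{A}^\rceil\subseteq\mathcal{B}^\rceil$, and $(\mathcal{L}^\rceil)^\rceil=\mathcal{L}$) this is equivalent to $[\mathcal{G}]^\rceil\subseteq\mathcal{L}$, which is exactly the definition of $\mathcal{L}$ being bidirected global Markov with respect to $\mathcal{G}$. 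For the pairwise property I would first record the small fact that dualization fixes the set of pairwise triples, i.e. $\mathcal{S}^\rceil=\mathcal{S}$: since $(a,b|C)^\rceil=(a,b|V\backslash abC)$ and $C\mapsto V\backslash abC$ is a bijection of the subsets of $V\backslash ab$, the dual of a pairwise triple is again pairwise. Because the dual commutes with intersection (being a bijection), this gives $\mathcal{S}(\mathcal{L})^\rceil=(\mathcal{L}\cap\mathcal{S})^\rceil=\mathcal{L}^\rceil\cap\mathcal{S}=\mathcal{S}(\mathcal{L}^\rceil)$. Hence ``$\mathcal{L}^\rceil$ undirected pairwise Markov'', namely $\mathcal{S}(\mathcal{L}^\rceil)\subseteq[\mathcal{G}]$, coincides with $\mathcal{S}(\mathcal{L})^\rceil\subseteq[\mathcal{G}]$, which is the definition of $\mathcal{L}$ being bidirected pairwise Markov. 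Chaining the three equivalences then proves the theorem.

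The main point requiring care --- and really the only nontrivial step --- is this translation of the two Markov definitions, i.e. verifying that ``$\mathcal{L}^\rceil$ undirected pairwise/global Markov'' is genuinely the same statement as ``$\mathcal{L}$ bidirected pairwise/global Markov'', which in turn rests on the identities $\mathcal{S}^\rceil=\mathcal{S}$ and $\mathcal{S}(\mathcal{L})^\rceil=\mathcal{S}(\mathcal{L}^\rceil)$ together with the order-preservation of the dual. Once these are in place the argument is a one-line invocation of Theorem~\ref{thm:global_conc}. For completeness one could also give a direct proof paralleling Pearl's argument in the bidirected setting: the direction global $\Rightarrow$ pairwise is easy, since $(a,b)\notin E$ forces $a\perp_\mathcal{G} b | V\backslash ab$ (every $a$--$b$ path must pass through an intermediate vertex), and the global rule with $S=V\backslash ab$ then yields $(a,b|\emptyset)\in\mathcal{L}$; the converse direction pairwise $\Rightarrow$ global is where composition does the real work and is substantially more laborious --- exactly the redundancy that the dualization approach removes.
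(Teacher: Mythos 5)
Your proposal is correct and takes essentially the same route as the paper: the paper obtains this theorem precisely by dualizing Theorem~\ref{thm:global_conc} through the rule-dualization framework of Section~\ref{sec:duality} and Lemma~\ref{lem:dual_operator}, exactly as you do (compare the alternate proof of Theorem~\ref{thm:pseudo_cov}, whose structure your argument mirrors step for step). Your explicit verification that $\mathcal{S}^\rceil=\mathcal{S}$, that $\mathcal{S}(\mathcal{L})^\rceil=\mathcal{S}(\mathcal{L}^\rceil)$, and that dualization is an order-preserving involution simply makes rigorous the translation of the pairwise and global Markov definitions that the paper treats as immediate.
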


The assumptions of both theorems above can be separately weakened. Localizability can be used in place of the semigraphoid rules, while the pseudographoid and reverse pseudographoid rules can be used in place of intersection and composition, respectively. and each can be stated in terms of necessary and sufficient conditions. We consider both theorems above in light of these weakened properties. In doing so, we modify the logic to assume the pairwise Markov property, and then provide an equivalence between properties of $\mathcal{L}$ and the global Markov property. While this deviates from the literature, we find it a more natural framework from a graphical modelling perspective; when a graph is used to model a relation $\mathcal{L}$, the graph $\mathcal{G}$ is chosen such that $\mathcal{L}$ is pairwise Markov with respect to $\mathcal{G}$ and hence that should be assumed. 

\subsection{Pseudographoids and the Undirected Markov Properties}
\label{subsec:pseudo_global}

To begin, we adapt Theorem \ref{thm:global_conc} above, considering the undirected pairwise and undirected global Markov properties for any localizable relation closed under the pseudographoid rule. This equivalence was originally considered by \citep{Pearl1986} under the assumptions of the semigraphoid and intersection rules, and a related result is stated in the language of relations by \citet[Lemma 3]{Lnenicka2007}. The proof in one direction is technical, but essentially mirrors that of Theorem \ref{thm:global_conc} due to \citet{Pearl1986}; it has therefore been moved to the appendix. While \citet{Pearl1986} shows that closure under intersection is sufficient for the result, we instead assume closure under the pseudographoid rule. 

\begin{thm}
\label{thm:pseudo_conc}
Let $\mathcal{L} \subseteq{T}(V)$ be undirected pairwise Markov with respect to some graph $\mathcal{G} = (V,E)$. Then $\mathcal{L}$ is undirected global Markov with respect to $\mathcal{G}$ if and only if $\mathcal{L} \cap [\mathcal{G}]$ is localizable $(L)$ and closed under the pseudographoid rule $(P)$.
\end{thm}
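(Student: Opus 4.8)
The plan is to prove the two implications separately, observing that the forward direction is immediate from the known structure of $[\mathcal{G}]$ while the reverse direction carries the technical content and mirrors the classical argument behind Theorem \ref{thm:global_conc}. For the forward direction, suppose $\mathcal{L}$ is undirected global Markov, i.e. $[\mathcal{G}] \subseteq \mathcal{L}$. Then $\mathcal{L} \cap [\mathcal{G}] = [\mathcal{G}]$, and it suffices to recall from Section \ref{subsec:graphical_models} that $[\mathcal{G}]$ is a semigraphoid closed under intersection, and hence localizable. Closure under the pseudographoid rule $(P)$ then follows because $(P)$ is implied by intersection together with decomposition: applying $(I)$ to $(a,b|Sc),(a,c|Sb)$ gives $(a,bc|S)$, and two applications of $(D)$ yield $(a,b|S)$ and $(a,c|S)$.

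For the reverse direction, I would write $\mathcal{M} := \mathcal{L} \cap [\mathcal{G}]$ and assume $\mathcal{M}$ is localizable and closed under $(P)$. Since $\mathcal{M} \subseteq [\mathcal{G}]$ always holds, establishing global Markovianity $[\mathcal{G}] \subseteq \mathcal{L}$ is equivalent to establishing $[\mathcal{G}] \subseteq \mathcal{M}$. Because both $[\mathcal{G}]$ and $\mathcal{M}$ are localizable, it is enough to prove the pairwise statement
\begin{align*}
a \perp_{\mathcal{G}} b \mid S' \;\Longrightarrow\; (a,b|S') \in \mathcal{M}, \qquad a,b \text{ singletons},\ S' \subseteq V\backslash ab.
\end{align*}
Indeed, localizability of $[\mathcal{G}]$ decomposes an arbitrary separated triple $(A,B|S)\in[\mathcal{G}]$ into such pairwise separations, and localizability of $\mathcal{M}$ then reassembles the resulting memberships back into $(A,B|S)\in\mathcal{M}$.

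The pairwise statement I would prove by downward induction on $|S'|$. The base case $|S'| = |V|-2$ is precisely the pairwise Markov hypothesis: $a \perp_{\mathcal{G}} b \mid V\backslash ab$ forces $(a,b)\notin E$, hence $(a,b|V\backslash ab)\in\mathcal{L}$, and as this triple also lies in $[\mathcal{G}]$ it lies in $\mathcal{M}$. For the inductive step with $|S'| < |V|-2$, pick a vertex $c\notin S'\cup ab$. Separation is monotone under enlarging the conditioning set, so $a\perp_{\mathcal{G}} b\mid S'c$ and by the inductive hypothesis $(a,b|S'c)\in\mathcal{M}$. The key graph-theoretic observation is that, since $S'$ separates $a$ from $b$, the vertex $c$ lies in the connected component of $\mathcal{G}\backslash S'$ of at most one of $a,b$; consequently either $a\perp_{\mathcal{G}} c\mid S'$ or $b\perp_{\mathcal{G}} c\mid S'$. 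In the first case $a\perp_{\mathcal{G}} c\mid S'b$, so $(a,c|S'b)\in\mathcal{M}$ by induction, and applying $(P)$ to $(a,b|S'c),(a,c|S'b)$ yields $(a,b|S')\in\mathcal{M}$; the second case is symmetric, using $c$ against $b$ as pivot and the symmetry of $[\mathcal{G}]$ (inherited by $\mathcal{M}$) to recover $(a,b|S')$.

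The main obstacle is the inductive step, and specifically the connected-component argument that supplies the second antecedent $(a,c|S'b)$ (or $(b,c|S'a)$) required to invoke $(P)$; this is the point at which the proof genuinely uses properties of undirected separation rather than pure closure-rule manipulation. Additional care is needed to match the range $S\subseteq S'\subseteq SAB\backslash ab$ quantified in the localizability condition with the conditioning sets produced along the induction, and to track symmetry so that both orderings $(a,b|\cdot)$ and $(b,a|\cdot)$ are covered in the degenerate case where every free vertex lies in a single component.
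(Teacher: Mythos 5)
Your proof takes essentially the same route as the paper's: the forward direction reduces to noting $\mathcal{L} \cap [\mathcal{G}] = [\mathcal{G}]$ and that $[\mathcal{G}]$ is a localizable pseudographoid (the paper cites Mat\'u\v{s} and also proves this directly in an appendix lemma; your derivation of $(P)$ from intersection plus decomposition is that same content), while the reverse direction is the identical downward induction on the size of the conditioning set, with the same component/path-splitting observation supplying the second antecedent for $(P)$, and localizability of $\mathcal{M} = \mathcal{L} \cap [\mathcal{G}]$ reassembling the pairwise memberships into $(A,B|S)$.

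The one step where your justification is false as written is the second case of the inductive step: you claim $\mathcal{M}$ ``inherits'' symmetry from $[\mathcal{G}]$. It does not. Intersecting $\mathcal{L}$ with a symmetric relation does not symmetrize it, and nothing in the hypotheses forces $\mathcal{L}$ (hence $\mathcal{M}$) to be symmetric --- localizability and $(P)$ do not imply the symmetry axiom, which is precisely the sort of semigraphoid assumption this theorem avoids. Concretely, suppose $b \perp_\mathcal{G} c \mid S'$ but $a \not\perp_\mathcal{G} c \mid S'$. Then $a$ and $c$ lie in the same connected component of the graph with $S'$ deleted while $b$ lies in a different one, so every path from $a$ to $c$ avoiding $S'$ also avoids $b$; hence $a \not\perp_\mathcal{G} c \mid S'b$, the triple $(a,c|S'b)$ is not in $[\mathcal{G}] \supseteq \mathcal{M}$, and the pivot-$a$ instance of $(P)$ is unavailable. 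Pivoting at $b$ yields only $(b,a|S') \in \mathcal{M}$, and neither $(L)$ nor $(P)$ ever exchanges the first and second components of a triple, so there is no route from $(b,a|S')$ to $(a,b|S')$. You should know, however, that the paper's own proof has exactly the same defect, concealed in the phrase ``without loss of generality'': for ordered triples the two cases are not interchangeable, because the conclusion of $(P)$ always carries the pivot in first position. Indeed, with ordered triples and no symmetry assumption one can build small counterexamples to the theorem as literally stated (a relation containing $(a,b|\emptyset)$ but not $(b,a|\emptyset)$ can satisfy all the hypotheses). So your proof is faithful to the paper's, gap included; repairing either argument requires reading triples as unordered, or adding symmetry of $\mathcal{L}$ on $[\mathcal{G}]$ to the hypotheses, after which your case-two step and the paper's WLOG both become legitimate.
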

\begin{proof}
$(\Leftarrow):$ See appendix.

$(\Rightarrow):$ Assume now that $\mathcal{L}$ is undirected global Markov with respect to $\mathcal{G}$. Then by definition, $\mathcal{L} \cap [\mathcal{G}] = [\mathcal{G}]$. By \citet[p. 108]{Matus1997}, $[\mathcal{G}]$ is a localizable pseudographoid, completing the proof. 
\end{proof}

\begin{rem}
\label{remark:conc_necessary}
Note that the necessary condition for equivalence between the undirected pairwise and undirected global Markov properties requires a condition on $\mathcal{L} \cap [\mathcal{G}]$, as opposed to all of $\mathcal{L}$. This is demonstrated by the next example. 
\end{rem}

\begin{example}
Let $\mathcal{G} = (V,E)$ be the path on four vertices, i.e., $V = \{a,b,c,d\}$ and $E = \{(a,b),(b,c),(c,d)\}$. In this case,
\begin{align*}
[\mathcal{G}] = \{(a,c|b), (a,d|b), (a,d|c), (a,d|bc), (b,d|c), (ab,d|c), (a,cd|b)\}.
\end{align*}
Consider then
\begin{align*}
\mathcal{L} = [\mathcal{G}] \cup \{(a,c|d)\}.
\end{align*}
Then $\mathcal{L}$ is both undirected pairwise and undirected global Markov with respect to $\mathcal{G}$, but is not closed under the pseudographoid rule, since it contains $(a,c|d)$ and $(a,d|c)$ but not $(a,c|\emptyset)$ and $(a,d|\emptyset)$. 
\end{example}

\subsection{Reverse Pseudographoids and the Bidirected Markov Properties}
\label{subsec:reverse_global}

The reverse pseudographoid rule is now examined in place of composition to relate the the bidirected pairwise and bidirected global Markov properties (see Theorem \ref{thm:pseudo_cov}). The original proof of such equivalence by \citet{Kauermann1996} showed sufficiency of the semigraphoid and composition rules, an assumption which is weakened here. We first provide a direct proof of this more general result. We then proceed to use the concept of duality to yield a much simpler proof of this general result. The direct proof in the $(\Leftarrow)$ direction of Theorem \ref{thm:pseudo_cov} below uses techniques similar in some ways to the proof of Theorem \ref{thm:global_cov} by \citet{Kauermann1996}. However, there are subtle differences. In fact it rather parallels the proof of Theorem \ref{thm:pseudo_conc} exactly in a dual sense. For this reason, and also to provide contrast with the brevity of the ensuing alternate proof which leverages duality, the direct proof of Theorem \ref{thm:pseudo_cov} is nevertheless given below. 

\begin{thm}
\label{thm:pseudo_cov}
Let $\mathcal{L} \subseteq{T}(V)$ be bidirected pairwise Markov with respect to some graph $\mathcal{G} = (V,E)$. Then $\mathcal{L}$ is bidirected global Markov with respect to $\mathcal{G}$ if and only if $\mathcal{L} \cap [\mathcal{G}]^\rceil$ is localizable $(L)$ and closed under the reverse pseudographoid rule $(R)$. 
\end{thm}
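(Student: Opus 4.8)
The plan is to prove Theorem \ref{thm:pseudo_cov} by dualizing Theorem \ref{thm:pseudo_conc}, exploiting the machinery of Section \ref{sec:duality} and especially Lemma \ref{lem:dual_operator}. The key observation is that all four relevant notions appearing in the bidirected theorem are the duals of the corresponding notions in the undirected theorem: bidirected pairwise Markov is the dual of undirected pairwise Markov, bidirected global Markov is the dual of undirected global Markov, the reverse pseudographoid rule $(R)$ is the dual of the pseudographoid rule $(P)$ (Lemma \ref{lem:dual_operator}, part 4), and localizability is self-dual (Lemma \ref{lem:dual_operator}, part 1). So rather than reproving anything, I would translate the hypotheses and conclusion of Theorem \ref{thm:pseudo_cov} into statements about $\mathcal{L}^\rceil$ and then invoke Theorem \ref{thm:pseudo_conc} applied to $\mathcal{L}^\rceil$.

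First I would set up the correspondence carefully. Let $\mathcal{L} \subseteq \mathcal{T}(V)$ be given, and consider $\mathcal{L}^\rceil$. From the discussion following the definition of duality in Section \ref{sec:duality}, $\mathcal{L}$ is bidirected pairwise Markov with respect to $\mathcal{G}$ if and only if $\mathcal{S}(\mathcal{L})^\rceil \subseteq [\mathcal{G}]$, which I would show is equivalent to $\mathcal{L}^\rceil$ being undirected pairwise Markov with respect to $\mathcal{G}$. Likewise, $\mathcal{L}$ is bidirected global Markov with respect to $\mathcal{G}$ means $[\mathcal{G}]^\rceil \subseteq \mathcal{L}$; applying the dualization operator (which is an involution and preserves inclusions, since $t \in \mathcal{L} \iff t^\rceil \in \mathcal{L}^\rceil$) this is equivalent to $[\mathcal{G}] \subseteq \mathcal{L}^\rceil$, i.e., $\mathcal{L}^\rceil$ is undirected global Markov with respect to $\mathcal{G}$.

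Next I would handle the condition on the intersection sets. Theorem \ref{thm:pseudo_conc} applied to $\mathcal{L}^\rceil$ concerns $\mathcal{L}^\rceil \cap [\mathcal{G}]$, so I need $\mathcal{L}^\rceil \cap [\mathcal{G}] = (\mathcal{L} \cap [\mathcal{G}]^\rceil)^\rceil$. This follows because dualization distributes over intersection, $(\mathcal{A} \cap \mathcal{B})^\rceil = \mathcal{A}^\rceil \cap \mathcal{B}^\rceil$, together with $([\mathcal{G}]^\rceil)^\rceil = [\mathcal{G}]$. Then by Lemma \ref{lem:dual_operator} parts (1) and (4), $\mathcal{L}^\rceil \cap [\mathcal{G}]$ is localizable and closed under the pseudographoid rule $(P)$ if and only if its dual $\mathcal{L} \cap [\mathcal{G}]^\rceil$ is localizable and closed under the reverse pseudographoid rule $(R)$. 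Chaining these equivalences, Theorem \ref{thm:pseudo_conc} for $\mathcal{L}^\rceil$ reads exactly as Theorem \ref{thm:pseudo_cov} for $\mathcal{L}$, completing the proof.

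The main obstacle I anticipate is purely bookkeeping: verifying that the dualization operator commutes with intersection and with the pairwise/global Markov conditions precisely as claimed, since the bidirected Markov definitions in Section \ref{subsec:markov_props} are stated with an explicit $(A,B|V\backslash ABS)$ conditioning set rather than overtly as duals, so I must confirm the translations line up on the nose. There is no deep difficulty, only the risk of a mismatch between $\mathcal{S}(\mathcal{L})^\rceil$ and $\mathcal{S}(\mathcal{L}^\rceil)$ or between the two forms of the global condition; once those identities are checked, the result is immediate. For completeness, as the theorem's surrounding text indicates, I would also note that a direct proof paralleling Theorem \ref{thm:pseudo_conc} in the dual sense is available, but the dualization argument above is the efficient route and is what I would present as the primary proof.
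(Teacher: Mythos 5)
Your proposal is correct and is essentially identical to the paper's own ``alternate proof'' of Theorem \ref{thm:pseudo_cov}: dualize $\mathcal{L}$, observe that bidirected pairwise/global Markov for $\mathcal{L}$ is undirected pairwise/global Markov for $\mathcal{L}^\rceil$, apply Theorem \ref{thm:pseudo_conc} to $\mathcal{L}^\rceil$, and convert $(L)$ and $(P)$ on $\mathcal{L}^\rceil \cap [\mathcal{G}]$ into $(L)$ and $(R)$ on $\mathcal{L} \cap [\mathcal{G}]^\rceil$ via Lemma \ref{lem:dual_operator} and the identity $(\mathcal{L} \cap [\mathcal{G}]^\rceil)^\rceil = \mathcal{L}^\rceil \cap [\mathcal{G}]$. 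If anything, you are slightly more careful than the paper in spelling out that $[\mathcal{G}]^\rceil \subseteq \mathcal{L} \iff [\mathcal{G}] \subseteq \mathcal{L}^\rceil$, a step the paper's short proof leaves implicit.
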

\begin{proof}
$(\Leftarrow):$ Assume that  $\mathcal{L} \cap [\mathcal{G}]^\rceil$ is $(L)$ and $(R)$. To begin, we show that $a \perp_\mathcal{G} b | S \Rightarrow (a,b|S)^\rceil \in \mathcal{L}$, where $a,b$ are singletons. As in the proof of Theorem \ref{thm:pseudo_conc}, this is done by induction on $|S|$. To begin, if $|S| = |V| - 2$ (i.e., $S = V\backslash\{ab\}$), it follows that $(a,b) \notin E$, which implies $(a,b|V\backslash ab)^\rceil = (a,b|\emptyset) \in \mathcal{L}^\rceil \cap [\mathcal{G}]$ by the bidirected pairwise Markov assumption.

Assume now that for singletons $a,b \subseteq V$ and $a,b\not\subseteq S^\prime$ with $|S^\prime| = k < |V| - 2$, $a \perp_\mathcal{G}  b | S^\prime \Rightarrow (a,b|S^\prime)^\rceil \in \mathcal{L}^\rceil \cap [\mathcal{G}]$. Then let $a\perp_\mathcal{G} b | S$ with $|S| = k - 1$. As $|S| < |V| - 2$, we can find some singleton $c \not \subseteq Sab$, and hence $a \perp_\mathcal{G} b | Sc$ and $a \perp_\mathcal{G}$. By the inductive hypothesis, this implies that $(a,b|Sc)^\rceil = (a,b|V \backslash Sabc) \in \mathcal{L}^\rceil \cap [\mathcal{G}]$. 

Next, as in the proof of Theorem \ref{thm:pseudo_conc}, $a \perp_\mathcal{G} b | S$ and $a \perp_\mathcal{G} b | Sc$ implies that either $a \perp_\mathcal{G} c | S$ or $c \perp_\mathcal{G} b | S$. Without loss of generality, let $a \perp_\mathcal{G} c | S$. Then $a \perp_\mathcal{G} c | Sb$, and by the inductive hypothesis, $(a,c|Sb)^\rceil = (a,c | V \backslash Sabc) \in \mathcal{L}^\rceil \cap [\mathcal{G}]$. Since $\mathcal{L}^\rceil \cap [\mathcal{G}]$ satisfies the reverse pseudographoid rule, $(a,b | V \backslash Sabc), (a,c | V \backslash Sabc) \in \mathcal{L}^\rceil \cap [\mathcal{G}]$ implies $(a,b| V\backslash Sab) = (a,b|S)^\rceil \in \mathcal{L}^\rceil \cap [\mathcal{G}]$. This completes the induction on $|S|$. 

Finally, assume that $A  \perp_\mathcal{G} B | S$ for disjoint $A,B,S \subseteq V$. Then $a \perp_\mathcal{G} b | S^\prime$ for singletons $a\subseteq A, b\subseteq B$ and any $S \subseteq S^\prime \subseteq SAB \backslash ab$. By the previous part of this proof, this implies that $(a,b|S^\prime )^\rceil = (a,b | V\backslash S^\prime ab) \subseteq\mathcal{L}^\rceil \cap [\mathcal{G}]$ for all such $a,b,S^\prime$.  As the family $\{V\backslash S^\prime ab \ : \ S \subseteq S^\prime \subseteq SAB\backslash ab \}$ is equivalent to $\{\tilde{S} \ : \ V\backslash SAB \subseteq \tilde{S} \subseteq V \backslash Sab = (V\backslash SAB) AB \backslash ab\}$, it follows by localizability that $(A,B | V\backslash SAB) = (A,B|S)^\rceil \in \mathcal{L}^\rceil \cap [\mathcal{G}]$, completing the assertion.

$(\Rightarrow):$ Assume now that $\mathcal{L}$ is bidirected global Markov with respect to $\mathcal{G}$. Then by definition, $\mathcal{L}\cap [\mathcal{G}]^\rceil = [\mathcal{G}]^\rceil$. As by \cite[p. 108]{Matus1997}, $[\mathcal{G}]$ is $(L)$ and $(P)$, it remains to note that localizability is preserved under dualization and that the dual of a pseudographoid is a reverse pseudographoid ((Lemma \ref{lem:dual_operator}). 
\end{proof}

Theorem \ref{thm:pseudo_cov} can also be proven using the concept of duality, outlined in Section \ref{sec:duality}. After applying Lemma \ref{lem:dual_operator}, Theorem \ref{thm:pseudo_cov} is seen to follow directly from Theorem \ref{thm:pseudo_conc} after a very short proof.  

\begin{proof}[\textbf{Alternate proof of Theorem \ref{thm:pseudo_cov}}]
Since $\mathcal{L}$ is bidirected pairwise Markov with respect to $\mathcal{G}$, by definition 
\begin{align*}
(a,b)\not\in E \Rightarrow (a,b|\emptyset) \in \mathcal{L} \iff(a,b|V\backslash ab)\in \mathcal{L}^\rceil. 
\end{align*}
Hence, $\mathcal{L}^\rceil$ is undirected pairwise Markov with respect to $\mathcal{G}$. 

Therefore by Theorem \ref{thm:pseudo_conc}, $\mathcal{L}^\rceil$ is undirected global Markov with respect to $\mathcal{G}$ if and only if $\mathcal{L}^\rceil \cap [\mathcal{G}]$ is localizable $(L)$ and closed under $(P)$. By Lemma \ref{lem:dual_operator}, the foregoing statement is true if and only if $(\mathcal{L}^\rceil \cap [\mathcal{G}])^\rceil = \mathcal{L} \cap [\mathcal{G}]^\rceil$ is localizable $(L)$ and closed under $(R)$.

\end{proof}

Thus, duality has allowed for a much shorter and simpler proof Theorem \ref{thm:global_cov} through reuse of Theorem \ref{thm:global_conc}. 

\section{Duality, Semigraphoids, and Pseudographoid Rules}
\label{sec:semi_and_pseudo}

In Section \ref{sec:pseudo_global}, the pairwise and global Markov properties were investigated using the pseudographoid and reverse pseudographoid rules. We now examine pseudographoid and reverse pseudographoid rules in the context of semigraphoids and localizability.

The pseudographoid and reverse pseudographoid rules are  weaker than the intersection and composition rules typically used to study Markov properties when considered on a general relation $\mathcal{L}$. However, undirected and bidirected graphs are defined solely to study systems of random variables, which are semigraphoids by default. Therefore, it makes sense to consider the pseudographoid and reverse pseudographoid rules as they relate to relations of the form $\mathcal{L} = [\mathbf{X}]$ for some random variable $\mathbf{X}$ with finite index set $V$. 

In this section, we demonstrate that when restricted to semigraphoids, closure under the pseudographoid and intersection rules are equivalent. The analogous statement is true for the reverse pseudographoid and composition rule. We demonstrate both equivalences in their own right, but further show how one equivalence can be dualized into the other for a simpler result. 

Furthermore, we show that localizable pseudographoids are a less restrictive class of relations than semigraphoids satisfying intersection. The dual result follows immediately. In particular, this shows that indeed our conditions for equivalences between pairwise and global Markov properties in Section \ref{sec:pseudo_global} are weaker than those which currently exist, for the space of relations.

As remarked in Section \ref{subsec:cond_independence}, for any random vector $\mathbf{X}$, the conditional independence structure $\mathcal{L} = [\mathbf{X}]$ is a semigraphoid, satisfying symmetry $(S)$, decomposition $(D)$, weak union $(U)$, and contraction $(C)$. By Lemma \ref{lem:semi_parsim}, this is equivalent to $\mathcal{L}$ satisfying 
\begin{align*}
(A,BC|S) \in \mathcal{L} \iff (A,B|S), (A,C|SB) \in \mathcal{L}.
\end{align*}
Similar parsimonious equivalences can be stated under the further assumption of intersection, composition, or the pseudographoid rules.

\begin{lemma}
\label{lemma:equivalences}
Let $\mathcal{L}$ be a semigraphoid. Then:
\begin{enumerate}
\item Closure of $\mathcal{L}$ under intersection is equivalent to 
\[
(A,BC|S) \in \mathcal{L} \iff (A,B|SC), (A,C|SB) \in \mathcal{L}.
\] 
\item Closure of $\mathcal{L}$ under composition is equivalent to 
\[
(A,BC|S) \in \mathcal{L} \iff (A,B|S), (A,C|S) \in \mathcal{L}.
\]
\item Closure of $\mathcal{L}$ under the pseudographoid rule is equivalent to 
\[
(a,bc|S) \in \mathcal{L}  \iff (a,b|Sc), (a,c|Sb) \in \mathcal{L}.
\] 
\item Closure of $\mathcal{L}$ under the reverse pseudographoid rule is equivalent to 
\[
(a,bc|S) \in \mathcal{L} \iff (a,b|S), (a,c|S) \in \mathcal{L} .
\]
\end{enumerate}

\end{lemma}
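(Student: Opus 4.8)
The lemma has four parts, but parts (3) and (4) are the pseudographoid analogues of parts (1) and (2), so I would organize the proof around establishing (1) and (2) directly and then either dualizing or mimicking to obtain (3) and (4). The overarching strategy is to exploit Lemma~\ref{lem:semi_parsim}, which already gives the parsimonious semigraphoid equivalence
\[
(A,BC|S) \in \mathcal{L} \iff (A,B|S),\ (A,C|SB) \in \mathcal{L},
\]
as a baseline. Each additional closure rule should be shown equivalent to upgrading this baseline equivalence to a cleaner symmetric form. I expect each part to reduce to combining the semigraphoid axioms $(D),(U),(C)$ with the one new rule to get the forward implication of the new $\iff$, while the reverse implication of the new $\iff$ is typically just a restatement of the relevant semigraphoid consequence.

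\textbf{Part (1), intersection.} First I would handle the $(\Leftarrow)$ direction of the lemma's claimed equivalence (assuming the displayed $\iff$, derive intersection). If $(A,B|SC),(A,C|SB)\in\mathcal{L}$, the displayed equivalence immediately yields $(A,BC|S)\in\mathcal{L}$, which is exactly intersection. For the $(\Rightarrow)$ direction (assuming intersection, prove the displayed $\iff$): the reverse arrow of the display is simply intersection applied to the hypotheses; the forward arrow, $(A,BC|S)\Rightarrow (A,B|SC),(A,C|SB)$, follows from weak union $(U)$ applied twice (once to peel off $C$, once to peel off $B$, using symmetry of $B,C$ in the union). So the content of (1) is just: intersection supplies the reverse arrow, weak union supplies the forward arrow.

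\textbf{Parts (2)--(4) by analogy and duality.} For part (2), composition, the same template works with the roles reversed: the forward arrow $(A,BC|S)\Rightarrow (A,B|S),(A,C|S)$ is furnished by decomposition $(D)$ (together with symmetry), and the reverse arrow is precisely the composition rule itself. Parts (3) and (4) are the statements obtained by restricting $B=b$, $C=c$ to singletons and replacing intersection/composition by the pseudographoid/reverse pseudographoid rules. The cleanest route is to observe that the singleton restriction of the argument for (1) is exactly the argument for (3)---the pseudographoid rule $(P)$ plays the role of intersection and weak union still supplies the forward arrow---and likewise (4) is the singleton restriction of (2), with the reverse pseudographoid rule $(R)$ replacing composition. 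Alternatively, I could invoke Lemma~\ref{lem:dual_operator}(3) and (4) to pass between (1)/(2) and between (3)/(4): since dualization exchanges intersection with composition and the pseudographoid rule with its reverse, establishing (1) and (3) lets one deduce (2) and (4) by applying the dualization operator, provided one checks that a semigraphoid dualizes to a semigraphoid (which is Lemma~\ref{lem:dual_operator}(2)). I would present (1) and (2) in full and then remark that (3) and (4) follow by the identical argument restricted to singletons.

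\textbf{Anticipated obstacle.} The delicate point is not the algebra but the bookkeeping of \emph{which} semigraphoid axiom supplies \emph{which} arrow, and ensuring the disjointness and nonemptiness conditions built into $\mathcal{T}(V)$ are respected when I split $BC$ into $B$ and $C$ or condition on enlarged separating sets. In particular, for the forward arrows I must verify that the intermediate triples (e.g. $(A,B|SC)$) are genuinely in $\mathcal{T}(V)$ and that weak union/decomposition apply as stated. For the duality route, the one thing to guard against is that duality is defined via $(A,B|C)^\rceil=(A,B|V\backslash ABC)$, so the conditioning sets transform nontrivially; I would lean on Lemma~\ref{lem:dual_operator} rather than recomputing duals by hand, which keeps this hazard contained. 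Overall I expect the proof to be short and essentially mechanical once the template from part (1) is fixed.
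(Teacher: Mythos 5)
Your parts (1) and (2) are correct and coincide with the paper's one-line argument: the backward arrow of each display is the rule itself, and the forward arrow is weak union (for intersection) or decomposition (for composition). The gap is in parts (3) and (4), where you assert that the argument for (1)/(2) carries over ``exactly'' after restricting to singletons, with $(P)$ and $(R)$ playing the roles of intersection and composition. This rests on a misreading of the paper's definitions. The pseudographoid rule $(P)$ is \emph{not} the singleton restriction of intersection: its conclusion is $(a,b|S),\,(a,c|S) \in \mathcal{L}$, with the two pairs kept separate, rather than the merged triple $(a,bc|S) \in \mathcal{L}$. Consequently the backward arrow of the display in (3), namely $(a,b|Sc),(a,c|Sb)\in\mathcal{L} \Rightarrow (a,bc|S)\in\mathcal{L}$, is not ``$(P)$ by definition'': applying $(P)$ only yields $(a,b|S),(a,c|S)\in\mathcal{L}$, and you must then invoke contraction $(C)$ (applied to $(a,c|S)$ together with the hypothesis $(a,b|Sc)$) to produce $(a,bc|S)$. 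That contraction step is exactly the content of the paper's proof of (3), and it is where the semigraphoid hypothesis is genuinely used. Likewise the converse direction of (3) --- that the display implies closure under $(P)$ --- needs decomposition to pass from $(a,bc|S)$ back to $(a,b|S),(a,c|S)$, so it too is not ``by definition.'' The same defect appears in your (4): the reverse pseudographoid rule concludes $(a,b|Sc),(a,c|Sb)$, not $(a,bc|S)$, so the backward arrow of display (4) requires $(R)$ followed by contraction (on $(a,b|S)$ and $(a,c|Sb)$), and the converse requires weak union.

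Your alternative of dualizing (1) into (2) and (3) into (4) via Lemma \ref{lem:dual_operator} is sound in principle (one must also perform the change of variables on the conditioning sets, e.g.\ $V\backslash ABCS \rightarrow S$, to land on the stated displays), but it cannot repair the gap: the duality map sends your flawed proof of (3) to an equally flawed proof of (4). Once the contraction and decomposition steps above are inserted, your argument for (3) and (4) becomes precisely the paper's proof.
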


\begin{proof}
The $(\Leftarrow)$ direction of $(1)$ is by definition, while $(\Rightarrow)$ follows from the weak union rule (as defined in Section \ref{subsec:relations}). Similarly, the $(\Leftarrow)$ direction of $(2)$ is by definition while the $(\Rightarrow)$ direction follows by decomposition.

To prove $(3)$, note that if $\mathcal{L}$ is a semigraphoid closed under the pseudographoid rule, then for $a,b,c$ singletons, 
\begin{align*}
(a,b|Sc), (a,c|Sb) \in \mathcal{L} \Rightarrow_{(P)} (a,b|S), (a,c|S) \in \mathcal{L})\\
(a,b|Sc), (a,c|S) \in \mathcal{L} \Rightarrow_{(C)} (a,bc|S) \in \mathcal{L} \\\
(a,bc|S) \in \mathcal{L} \Rightarrow_{(U)} (a,b|Sc), (a,c|Sb) \in \mathcal{L},
\end{align*}
where for clarity we underscore implications with letter of the corresponding rule as given in Section \ref{subsec:cond_independence}. The other direction follows by definition. 

To prove $(4)$, note that
\begin{align*}
(a,b|S), (a,c|S) \in \mathcal{L} \Rightarrow_{(R)} (a,b|Sc), (a,c|Sb) \in \mathcal{L} \\
(a,b|S), (a,c|Sb) \in \mathcal{L} \Rightarrow_{(C)} (a,bc|S) \in \mathcal{L}
\end{align*}. 

\end{proof}

The following lemma shows equivalence between the pseudographoid and intersection rules, when the semigraphoid rules are satisfied. 

\begin{lemma}
\label{lemma:pseudo_inter_equiv}
Let $\mathcal{L} \subseteq \mathcal{T}(V)$ be a semigraphoid. Then $\mathcal{L}$ is closed under the pseudographoid rule if and only if it is closed under intersection.
\end{lemma}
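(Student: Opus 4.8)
The plan is to prove the two implications separately, leaning on the parsimonious characterizations already established in Lemma~\ref{lemma:equivalences}. The direction ``intersection $\Rightarrow$ pseudographoid'' is immediate: given singletons $a,b,c$ and $(a,b|Sc),(a,c|Sb)\in\mathcal{L}$, intersection $(I)$ yields $(a,bc|S)\in\mathcal{L}$, and two applications of decomposition $(D)$ return $(a,b|S),(a,c|S)\in\mathcal{L}$, which is exactly the conclusion of $(P)$. This uses nothing beyond $(I)$ and the semigraphoid axioms.

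The substance is the converse. By Lemma~\ref{lemma:equivalences}(3), a semigraphoid closed under $(P)$ already satisfies the singleton statement $(a,b|Sc),(a,c|Sb)\Rightarrow(a,bc|S)$, while by Lemma~\ref{lemma:equivalences}(1) full closure under $(I)$ is equivalent to the set-valued statement $(A,B|SC),(A,C|SB)\Rightarrow(A,BC|S)$; so the whole task is to bootstrap the singleton version up to the set version. I would do this in three nested inductions. First, fixing singletons $a,b$ and a general set $C$, I would show $(a,b|SC),(a,C|Sb)\Rightarrow(a,bC|S)$ by induction on $|C|$: writing $C=C'c$, I use weak union $(U)$ on $(a,C'c|Sb)$ to extract the companion $(a,c|SbC')$, apply $(P)$ to strip $c$ from $(a,b|SC'c)$ and obtain $(a,b|SC')$, invoke the inductive hypothesis (with $(a,C'|Sb)$ coming from decomposition) to merge $b$ with $C'$, and finally reattach $c$ by contraction $(C)$. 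Second, with $a$ a singleton but $B,C$ both general, I would prove $(a,B|SC),(a,C|SB)\Rightarrow(a,BC|S)$ by induction on $|C|$, using the first lemma with conditioning base $SC'$ to remove a single $c$ and then the inductive hypothesis together with contraction to reassemble. Third, I would lift to a general left set $A$ by induction on $|A|$: splitting $A=A'a$, decomposition feeds the inductive hypothesis to give $(A',BC|S)$, weak union plus the singleton case gives $(a,BC|SA')$, and contraction on the $A$-side combines these into $(A'a,BC|S)$.

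The main obstacle is the innermost stripping step, where $(P)$ must be used to delete an element from a conditioning set. The rule removes only a singleton, and only when the matching companion triple is present; a naive attempt to strip $B$- and $C$-elements independently deadlocks, since deleting a $C$-element from a pairwise fact requires the full $B$ in the conditioning set and vice versa. The resolution, which the layered induction encodes, is to always peel the one element that the partner hypothesis can supply through weak union, so that each strip reduces to a strictly smaller instance; the second conclusion produced by $(P)$, namely the companion with reduced conditioning, is exactly what keeps the induction alive. Localizability (available for semigraphoids by Lemma~\ref{lem:semi_local_comparison}) could alternatively be used to reassemble the pairwise facts into the set statement, but the contraction-based reassembly above avoids invoking it explicitly.
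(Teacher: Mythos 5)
Your proposal is correct and takes essentially the same route as the paper: both arguments bootstrap the singleton rule $(P_{111})$, i.e., $(a,b|Sc),(a,c|Sb)\Rightarrow(a,bc|S)$, up to full intersection by a triple nested induction over the sizes of the three sets, using weak union to extract the companion triple, decomposition to feed the inductive hypothesis, and contraction to reassemble. The only difference is the order of the coordinates --- the paper enlarges $A$ first, then $B$, then $C$, whereas you generalize the right-hand sets first and lift to general $A$ last --- which is immaterial to the argument.
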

\begin{proof}
The $(\Leftarrow)$ direction is clear; we will show the $(\Rightarrow)$ direction. Define first the generalized pseudographoid rule by
\begin{align*}
(A,B|SC), (A,C|SB) \in \mathcal{L} \Rightarrow (A,BC|S) \in \mathcal{L},\ : \ |A|\leq l, |B| \leq m, |C| \leq n. 
\end{align*}
This rule will be denoted by $(P_{lmn})$; note that the pseudographoid rule on a semigraphoid is $(P_{111})$ while intersection is $(P_{|V|-3, |V|-3, |V|-3})$. Starting then with $(P_{111})$, by induction on $l$, $m$, and $n$. 

\subsubsection*{Claim \ref{lemma:pseudo_inter_equiv}.1}
If the semigraphoid $\mathcal{L}$ is closed under $(P_{k11})$ for some $k$, it is also closed under $(P_{(k+1)11})$. 

To prove Claim \ref{lemma:pseudo_inter_equiv}.1, fix some pairwise disjoint $A,a,b,c,S$ with $|A| = k$ and $a,b,c$ singletons and assume $(Aa,b|Sc), (Aa,c|Sb) \in \mathcal{L}$. Then first,
\begin{align*}
(Aa,b|Sc) \in \mathcal{L} &\Rightarrow_{(D)} (A,b|Sc) \in \mathcal{L}\\
(Aa, c|Sb) \in \mathcal {L} & \Rightarrow_{(D)} (A,c|Sb) \in \mathcal{L}\\
(A,b|Sc), A,c|Sb)  \in \mathcal{L} &\Rightarrow_{(P_{k11})}(A,bc|S) \in \mathcal{L}.  
\end{align*}
Second,
\begin{align*}
(Aa,b | Sc) \in \mathcal{L} & \Rightarrow_{(U)} (a,b|SAc) \in \mathcal{L}\\
(Aa, c | Sb) \in \mathcal{L} & \Rightarrow_{(U)} (a,c|SAb) \in \mathcal{L}\\
(a,b|SAc), (a,c|SAb) \in \mathcal{L} &\Rightarrow_{(P_{k11})} (a,bc|SA) \in \mathcal{L}.
\end{align*}
Finally,
\begin{align*}
(A, bc|S), (a,bc|SA) \in \mathcal{L} \Rightarrow_{(C)} Aa \ci bc | S. 
\end{align*}
which shows closure under $(P_{(k+1)11})$. 

The next claim shows induction on the next coordinate. 
\subsubsection*{Claim \ref{lemma:pseudo_inter_equiv}.2}
If the semigraphoid $\mathcal{L}$ is closed under $(P_{(|V|-3)k1})$ for some $k$, it is also closed under $(P_{(|V|-3)(k+1)1})$. 

Assume that for some pairwise disjoint $A,B,b,c,S$ with $|B| = k$ and $b,c$ singletons, $(A,Bb|Sc), (A,c|SBb) \in \mathcal{L}$. Then 
\begin{align*}
(A,Bb|Sc) \in \mathcal{L} &\Rightarrow_{(U)} (A,B|Sbc) \in \mathcal{L}\\
(A,B|Sbc), (A,c|SBb) \in \mathcal{L} & \Rightarrow_{(P_{(|V|-2)k1})} (A,Bc|Sb) \in \mathcal{L}\\
(A,Bc|Sb) \in \mathcal{L} &\Rightarrow_{(D)} (A,c|Sb) \in \mathcal{L}\\
(A, Bb | Sc) \in \mathcal{L} & \Rightarrow_{(D)} (A, b | Sc) \in \mathcal{L} \\
(A,c|Sb), (A, b | Sc) \in \mathcal{L} &\Rightarrow_{(P_{(|V|-3)k1})} (A,bc|S) \in \mathcal{L}\\
(A,bc|S) (A,B|Sbc) \in \mathcal{L} & \Rightarrow_{(C)}(A,Bbc| S) \in \mathcal{L}. 
\end{align*}
which proves Claim \ref{lemma:pseudo_inter_equiv}.2. 

The next claim completes the inductive argument.
\subsubsection*{Claim \ref{lemma:pseudo_inter_equiv}.3}
If the semigraphoid $\mathcal{L}$ is closed under $(P_{(|V|-3)(|V|-3)k})$ for some $k$, it is also closed under $(P_{(|V|-3)(|V|-3)(k+1})$. 

For disjoint $A,B,C,c,S$ with $|C| = k$ and $|c| = 1$, assume that $(A,B|SCc), (A,Cc|SB) \in \mathcal{L}$. Then
\begin{align*}
(A,Cc|SB) \in \mathcal{L}  & \Rightarrow_{(U)}(A,C|SBc) \in \mathcal{L} \\
(A,B|SCc), (A,C|SBc) \in \mathcal{L} &\Rightarrow_{(P_{(|V|-3)(|V|-3)k})} (A,BC|Sc) \in \mathcal{L}\\
(A,BC|Sc) \in \mathcal{L} &\Rightarrow_{(D)} (A,B|Sc) \in \mathcal{L}\\
(A,Cc|SB) \in \mathcal{L} &\Rightarrow_{(D)} (A,c|SB) \in \mathcal{L} \\
(A,c|SB), (A,B|Sc) \in \mathcal{L} &\Rightarrow_{(P_{(|V|-3)(|V|-3)k})} (A, Bc|S) \in \mathcal{L} \\
(A, Bc|S), (A,C|SBc) \in \mathcal{L} &\Rightarrow_{(C)} (A,BCc|S) \in \mathcal{L} 
\end{align*}
which completes the proof of Claim \ref{lemma:pseudo_inter_equiv}.3. The three claims prove the lemma; the pseudographoid property is equivalent to intersection in view of the semigraphoid properties. 

\end{proof}

The following lemma shows that closure under composition and the reverse pseudographoid rule are equivalent for pseudographoids when the semigraphoid rules are satisfied.

\begin{lemma}
\label{lemma:pseudo_comp_equiv}
Let $\mathcal{L} \subseteq \mathcal{T}(V)$ be a semigraphoid. Then $\mathcal{L}$ is closed under the reverse pseudographoid rule if and only if it is closed under intersection. 
\end{lemma}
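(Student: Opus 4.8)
The plan is to avoid repeating the three-part induction of Lemma~\ref{lemma:pseudo_inter_equiv} and instead obtain the equivalence by dualizing that lemma, in keeping with the stated aim of transporting one equivalence into the other. The key observation is that closure under the reverse pseudographoid rule is precisely the dual of closure under the pseudographoid rule, so that Lemma~\ref{lemma:pseudo_inter_equiv} -- which identifies the pseudographoid rule with intersection on semigraphoids -- can be carried across the dualization operator $(\cdot)^\rceil$ essentially verbatim. Intersection is the pivot of the whole argument, entering through Lemma~\ref{lemma:pseudo_inter_equiv} applied to the dual relation.

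First I would note that, because $\mathcal{L}$ is a semigraphoid, Lemma~\ref{lem:dual_operator}(2) guarantees that $\mathcal{L}^\rceil$ is a semigraphoid as well; this is exactly what permits an application of Lemma~\ref{lemma:pseudo_inter_equiv} to $\mathcal{L}^\rceil$. The heart of the argument is then a short chain of biconditionals, each supplied by a result already in hand. By Lemma~\ref{lem:dual_operator}(4), $\mathcal{L}$ is closed under the reverse pseudographoid rule $(R)$ if and only if $\mathcal{L}^\rceil$ is closed under the pseudographoid rule $(P)$. Applying Lemma~\ref{lemma:pseudo_inter_equiv} to the semigraphoid $\mathcal{L}^\rceil$ shows that the latter holds if and only if $\mathcal{L}^\rceil$ is closed under intersection. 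Intersection is therefore the operative closure condition; Lemma~\ref{lem:dual_operator}(3) then transfers this intersection-closure of $\mathcal{L}^\rceil$ back to the matching closure statement for $\mathcal{L}$, closing the chain and yielding the asserted equivalence.

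The main point to get right is not a deep one: it is purely the bookkeeping of checking that the singleton constraints in $(R)$ and $(P)$, and the complemented conditioning sets $V\backslash(\cdot)$ produced by $(\cdot)^\rceil$, are aligned correctly at each link of the chain. All of these verifications are already discharged inside Lemma~\ref{lem:dual_operator}, so no new computation is required and the proof is immediate once that lemma and Lemma~\ref{lemma:pseudo_inter_equiv} are invoked; the only genuinely substantive input is Lemma~\ref{lemma:pseudo_inter_equiv} itself, which I would treat as a black box here. For completeness I would add that a self-contained proof is equally available by mirroring the three-claim induction of Lemma~\ref{lemma:pseudo_inter_equiv}, replacing each appeal to $(P)$ and intersection with the corresponding appeal to $(R)$; but the dualization route above is far shorter and is the one I would present.
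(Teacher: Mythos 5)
Your proposal is correct and is essentially the paper's own alternate proof of this lemma: the identical chain through Lemma~\ref{lem:dual_operator} (parts (2), (4), and (3)) and Lemma~\ref{lemma:pseudo_inter_equiv} applied to the semigraphoid $\mathcal{L}^\rceil$, with the mirrored three-claim induction mentioned as a fallback just as the paper relegates it to the appendix. The only point worth noting is that your chain (correctly) lands on closure of $\mathcal{L}$ under \emph{composition}, so the word ``intersection'' in the lemma's statement is a typo in the paper; the surrounding text and both of the paper's proofs likewise conclude composition.
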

\begin{proof}
A direct proof is given in the appendix and should be compared with the direct proof of the dual result in Lemma \ref{lemma:pseudo_inter_equiv}.
\end{proof}

Although we have provided a full proof of Lemma \ref{lemma:pseudo_comp_equiv} in the appendix, the result can be been proven in just a few lines using the dualization procedure of Section \ref{sec:duality}; once more underscoring the power of duality. 

\begin{proof}[\textbf{Alternate proof of Lemma \ref{lemma:pseudo_comp_equiv}}]
Assume first that $\mathcal{L}$ be a semigraphoid closed under the pseudographoid rule. Then by Lemma \ref{lem:dual_operator}, $\mathcal{L}^\rceil$ is a semigraphoid which is closed under the reverse pseudographoid rule. Hence by Lemma \ref{lemma:pseudo_inter_equiv}, $\mathcal{L}^\rceil$ is closed under the intersection rule, which implies after again applying Lemma \ref{lem:dual_operator} that $(\mathcal{L}^\rceil)^\rceil = \mathcal{L}$ is closed under composition. 

The reverse direction is analogous. 
\end{proof}

We have shown that the pseudographoid (resp. reverse pseudographoid) rule is equivalent to intersection (resp. composition) on the set of semigraphoids. However, the following lemma shows that localizability and the pseudographoid (resp. reverse pseudographoid) rule is a weaker set of rules than the semigraphoid rules along with intersection (resp. composition).
\begin{lemma}
For any relation $\mathcal{L}$,
\begin{enumerate}
\item closure of $\mathcal{L}$ under the semigraphoid rules and intersection $(SDUCI)$ implies closure of $\mathcal{L}$ under symmetry, localizability, and the pseudographoid rule $(SLP)$. However, the converse does not hold. 
\item closure of $\mathcal{L}$ under the semigraphoid rules and composition $(SDUCM)$ implies closure of $\mathcal{L}$ under symmetry, localizability, and the reverse pseudographoid rule $(SLP)$. However, the converse does not hold. 
\end{enumerate}
\end{lemma}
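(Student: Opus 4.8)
The plan is to prove part (1) directly and then transport it to part (2) by the dualization technique of Section \ref{sec:duality}. For the forward implication of (1), symmetry $(S)$ is shared by both rule sets and needs no argument; closure under the semigraphoid axioms $(SDUC)$ gives localizability $(L)$ by Lemma \ref{lem:semi_local_comparison}; and since a relation closed under $(SDUCI)$ is in particular a semigraphoid closed under intersection, Lemma \ref{lemma:pseudo_inter_equiv} yields the pseudographoid rule $(P)$. Together these give $(SLP)$.

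For the converse of (1), I would produce a relation closed under $(SLP)$ but not $(SDUCI)$; since $(SLP)$ invokes a semigraphoid axiom only through $(S)$, it suffices to violate contraction $(C)$. Taking $V = \{a,b,c\}$, consider the symmetrized version of the relation from the proof of Lemma \ref{lem:semi_local_comparison}, namely $\mathcal{L} = \{(a,b|\emptyset), (b,a|\emptyset), (a,c|b), (c,a|b)\}$. Symmetry holds by construction, and localizability holds because no compound triple is forced — for example $(a,bc|\emptyset)$ would require $(a,b|c) \in \mathcal{L}$, which is absent. The pseudographoid rule is satisfied only vacuously: on $V$ every instance has conditioning set $\emptyset$, and its antecedent would require two triples conditioned on distinct singletons sharing a first coordinate, whereas the only conditioned triples of $\mathcal{L}$ are conditioned on $\{b\}$. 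Yet $(a,b|\emptyset)$ and $(a,c|b)$ lie in $\mathcal{L}$ while their contraction $(a,bc|\emptyset)$ does not, so $\mathcal{L}$ fails $(C)$ and hence $(SDUCI)$.

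For part (2), I would dualize rather than repeat the work. By Lemma \ref{lem:dual_operator}, $\mathcal{L}$ is a semigraphoid iff $\mathcal{L}^\rceil$ is, $\mathcal{L}$ is closed under composition iff $\mathcal{L}^\rceil$ is closed under intersection, $\mathcal{L}$ is localizable iff $\mathcal{L}^\rceil$ is, and $\mathcal{L}$ is closed under the reverse pseudographoid rule iff $\mathcal{L}^\rceil$ is closed under the pseudographoid rule; symmetry is plainly preserved by dualization. Hence $(SDUCM)$ holds for $\mathcal{L}$ exactly when $(SDUCI)$ holds for $\mathcal{L}^\rceil$, and $(SLR)$ holds for $\mathcal{L}$ exactly when $(SLP)$ holds for $\mathcal{L}^\rceil$. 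Applying the forward implication of (1) to $\mathcal{L}^\rceil$ gives the forward implication of (2), and dualizing the counterexample — the relation $\mathcal{L}^\rceil = \{(a,b|c), (b,a|c), (a,c|\emptyset), (c,a|\emptyset)\}$, which is $(SLR)$ but not $(SDUCM)$ — shows the converse of (2) fails. A direct proof of the forward part of (2) via Lemma \ref{lemma:pseudo_comp_equiv} is also available, but duality renders it superfluous.

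The step I expect to be the main obstacle is the careful verification of the single counterexample: confirming that localizability genuinely fails to force the missing compound triple, and that the pseudographoid rule holds only vacuously. Once that one relation is pinned down, Lemma \ref{lem:dual_operator} carries the entire argument over to part (2) with no additional computation.
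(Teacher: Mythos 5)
Your proof is correct, and it diverges from the paper's own argument in two respects worth recording. The forward implications coincide with the paper's: symmetry is common to both rule sets, Lemma \ref{lem:semi_local_comparison} supplies localizability, and intersection yields the pseudographoid rule (the paper observes this directly as a restriction to singletons, while you route it through Lemma \ref{lemma:pseudo_inter_equiv}; either works, and yours only uses the easy direction of that lemma). The real difference is the counterexample. The paper takes $\mathcal{L} = \{(a,b|\emptyset), (a,b|c)\}$ and claims it is localizable and pseudographoid but not a semigraphoid; however, the only semigraphoid axiom that relation violates is symmetry itself, so as written it fails $(SLP)$ as well, and its symmetrization $\{(a,b|\emptyset),(b,a|\emptyset),(a,b|c),(b,a|c)\}$ satisfies every rule under discussion vacuously (including contraction and intersection), hence witnesses nothing. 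Your relation $\{(a,b|\emptyset),(b,a|\emptyset),(a,c|b),(c,a|b)\}$ --- the symmetrized version of the relation used in Lemma \ref{lem:semi_local_comparison} --- avoids this defect: it is symmetric by construction, localizable and vacuously pseudographoid (as you verify), yet contains $(a,b|\emptyset)$ and $(a,c|b)$ without $(a,bc|\emptyset)$, so contraction genuinely fails. Finally, for part (2) the paper merely asserts the proof ``is analogous,'' whereas you transport both the implication and the counterexample through Lemma \ref{lem:dual_operator}; this is precisely the dualization technique the paper promotes elsewhere, and it delivers part (2), including an explicit counterexample $\{(a,b|c),(b,a|c),(a,c|\emptyset),(c,a|\emptyset)\}$, with no additional verification.
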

\begin{proof}
To prove $1.$, first note that all semigraphoids are localizable as in Lemma \ref{lem:semi_local_comparison}. Furthermore, intersection implies the pseudographoid rule. To see that the converse does not hold, consider the relation $\mathcal{L} = \{ (a, b | \emptyset), (a, b | c)\}$ which localizable and closed under the pseudographoid rule, but is not a semigraphoid. 

The proof of $2.$ is analogous. 
\end{proof}

We have concluded that with respect to semigraphoids (and hence $[\mathbf{X}]$ where $\mathbf{X}$ is a random variable), the pseudographoid (respectively, reverse pseudographoid) and intersection (respectively, composition) properties are equivalent. This was done both directly, and also by using the concept of duality with the latter. However, the semigraphoid rules along with intersection/composition are stronger than localizability along with the pseudographoid/reverse pseudographoid rule. The results of Section \ref{sec:pseudo_global} regarding the pairwise and global Markov properties should be interpreted in this context. 

\section{Duality and Faithfulness}
\label{sec:faithful_trees}

Recall that the global Markov properties generally required containment of $[\mathcal{G}]$ or $[\mathcal{G}]^\rceil$ within $\mathcal{L}$. The notion of \emph{faithfulness} is the reverse inclusion; $\mathcal{L}$ is said to be
\begin{itemize}
\item[a.] \emph{Undirected faithful} to $\mathcal{G}$ if $(A,B |S) \in \mathcal{L} \Rightarrow (A,B|S) \in \mathcal{G}$, that is $\mathcal{L} \subseteq [\mathcal{G}]$, and
\item[b.] \emph{Bidirected faithful} to $\mathcal{G}$ if  $(A,B |S)^\rceil \in \mathcal{L} \Rightarrow (A, B|S) \in \mathcal{G}$, that is $\mathcal{L}^\rceil \subseteq [\mathcal{G}]$. 
\end{itemize}
In some places in the literature faithfulness is defined simple as equivalence between $\mathcal{L}$ (or $\mathcal{L}^\rceil$) and $[\mathcal{G}$. The authors believe the one-way inclusion makes more sense, since then faithfulness of $\mathcal{L}$ to $\mathcal{G}$ says that every triple $\mathcal{L}$ is somehow encoded in $\mathcal{G}$. This allows for $\mathcal{G}$ to be more complex than $\mathcal{L}$. On the other hand, the undirected global Markov property (e.g.) requires that $[\mathcal{G}]$ is contained in $\mathcal{L}$, that is, $\mathcal{G}$ is less complex than $\mathcal{L}$. This interpretation highlights the trade-off between complexity of a graphical model $\mathcal{G}$ when used to model a conditional independence structure $[\mathbf{X}]$. 

\citet{Becker2005} provide conditions under with a relation is undirected faithful to a undirected tree. A weaker result exists for bidirected Markov trees, in the Gaussian setting, due to \citet{Malouche2011}. In Section \ref{sec:faithful_trees}, the results of \citet{Becker2005} for undirected trees will be related those of \citet{Malouche2011} for bidirected trees, and the latter will be strengthened.

In this section, we investigate sufficient conditions under which a relation is faithful to its bidirected graph. We begin with the Gaussian case, and in this regard give an alternate proof of the main theorem of \citet{Malouche2011}. We also note that faithfulness for bidirected graphs, aside from the Gaussian setting, has not been considered by \citet{Malouche2011}. We proceed to show how the result of \citet{Malouche2011} can be easily generalized by dualizing the result of \citet{Becker2005}, and through this technique extend to non-Gaussian settings. In this case, dualization results directly in a stronger result than a parallel result in the literature, demonstrating the power of the technique.

\subsection{The Gaussian Case}
We begin by stating the main result of \citet{Malouche2011} in the notation of relations. 
\begin{thm}[\citet{Malouche2011}]
\label{thm:gaussian_cov_trees}
Let $\mathbf{X} = \{\mathbf{X}_v\}_{v\in V}$ be a Gaussian random vector with bidirected graph $\mathcal{G}_{bi}(\mathbf{X})  = (V, E_{bi}(\mathbf{X}) )$. If $\mathcal{G}_{bi}(\mathbf{X}) $ is a disjoint union of trees, then $\mathbf{X}$ is bidirected faithful to $\mathcal{G}_{un}(\mathbf{X}) $, and hence $[\mathbf{X}] = [\mathcal{G}_{un}(\mathbf{X}) ]$. 
\end{thm}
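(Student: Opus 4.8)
The plan is to avoid any direct combinatorial analysis of the bidirected tree and instead \emph{transport} the undirected faithfulness theorem of \citet{Becker2005} through the Gaussian duality of Lemma \ref{lem:gaussian_duality}. First I would introduce the dual Gaussian vector $\mathbf{Y}\sim\mathcal{N}(0,\Sigma^{-1})$; Lemma \ref{lem:gaussian_duality} then gives $[\mathbf{Y}]=[\mathbf{X}]^\rceil$, so that every statement about $\mathbf{X}$ can be read off from the corresponding statement about the dual vector $\mathbf{Y}$.

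The linchpin is the edge-set identity $\mathcal{G}_{un}(\mathbf{Y})=\mathcal{G}_{bi}(\mathbf{X})$, which follows by a one-line computation directly from the two construction rules:
\begin{align*}
(a,b)\notin E_{un}(\mathbf{Y}) \iff (a,b|V\backslash ab)\in[\mathbf{Y}]=[\mathbf{X}]^\rceil \iff (a,b|\emptyset)\in[\mathbf{X}] \iff (a,b)\notin E_{bi}(\mathbf{X}).
\end{align*}
Consequently the hypothesis that $\mathcal{G}_{bi}(\mathbf{X})$ is a disjoint union of trees is \emph{exactly} the statement that the undirected graph $\mathcal{G}_{un}(\mathbf{Y})$ of the dual vector is a forest, to which the undirected theory applies (the tree case of \citet{Becker2005} extending to the disjoint union componentwise).

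With this in hand I would invoke the undirected result on $\mathbf{Y}$. As a Gaussian, $\mathbf{Y}$ is a semigraphoid closed under intersection and satisfies the decomposable transitivity condition required by \citet{Becker2005}; since $\mathcal{G}_{un}(\mathbf{Y})$ is a forest, their theorem yields undirected faithfulness $[\mathbf{Y}]\subseteq[\mathcal{G}_{un}(\mathbf{Y})]$. The reverse inclusion is the undirected global Markov property: $\mathbf{Y}$ is undirected pairwise Markov to $\mathcal{G}_{un}(\mathbf{Y})$ by construction, so closure under intersection together with Theorem \ref{thm:global_conc} gives $[\mathcal{G}_{un}(\mathbf{Y})]\subseteq[\mathbf{Y}]$. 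Hence $[\mathbf{Y}]=[\mathcal{G}_{un}(\mathbf{Y})]$, i.e. $\mathbf{Y}$ is undirected faithful to its own undirected graph.

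Finally I would dualize: applying the dual operator to $[\mathbf{Y}]=[\mathcal{G}_{un}(\mathbf{Y})]$ and using $(\mathcal{L}^\rceil)^\rceil=\mathcal{L}$ gives $[\mathbf{X}]=[\mathbf{Y}]^\rceil=[\mathcal{G}_{un}(\mathbf{Y})]^\rceil$. Substituting the graph identity $\mathcal{G}_{un}(\mathbf{Y})=\mathcal{G}_{bi}(\mathbf{X})$ and reading $[\mathcal{G}_{un}]^\rceil=[\mathcal{G}_{bi}]$ as the passage from undirected to bidirected separation on that single graph, this is precisely the assertion that $\mathbf{X}$ is bidirected faithful to $\mathcal{G}_{bi}(\mathbf{X})$ with equality, namely $[\mathbf{X}]^\rceil=[\mathcal{G}_{bi}(\mathbf{X})]$. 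I expect the main obstacle to be the justification of the imported hypotheses for $\mathbf{Y}$ — in particular that a Gaussian genuinely satisfies decomposable transitivity so that \citet{Becker2005} applies — together with the careful bookkeeping needed to keep the relation-level duality $[\mathbf{Y}]=[\mathbf{X}]^\rceil$ distinct from the graph-level duality $[\mathcal{G}_{un}]^\rceil=[\mathcal{G}_{bi}]$. If one prefers to avoid asserting decomposable transitivity for Gaussians directly, Lemma \ref{lem:gaussian_dual_closure} supplies the alternative of transferring this closure rule between $\mathbf{X}$ and $\mathbf{Y}$.
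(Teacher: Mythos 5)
Your proposal is correct and follows essentially the same route as the paper's own proof: both introduce the dual Gaussian vector $\mathbf{Y}\sim\mathcal{N}(0,\Sigma^{-1})$, use Lemma \ref{lem:gaussian_duality} to obtain $[\mathbf{Y}]=[\mathbf{X}]^\rceil$ and the graph identity $\mathcal{G}_{un}(\mathbf{Y})=\mathcal{G}_{bi}(\mathbf{X})$, apply the undirected faithfulness theorem of \citet{Becker2005} together with intersection and the undirected global Markov property to conclude $[\mathbf{Y}]=[\mathcal{G}_{un}(\mathbf{Y})]$, and then dualize back to $[\mathbf{X}]^\rceil=[\mathcal{G}_{bi}(\mathbf{X})]$. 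The only cosmetic difference is that the paper cites the Gaussian restatement (Theorem \ref{thm:gaussian_conc_trees}) directly, so the decomposable-transitivity verification you flag as a potential obstacle is absorbed into that citation.
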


Note that in Theorem \ref{thm:gaussian_cov_trees} above, the bidirected faithful statement is equivalent to saying that $[\mathbf{X}] \subseteq [\mathcal{G}_{bi}(\mathbf{X}) ]$. The reverse inclusion, $[\mathcal{G}_{bi}(\mathbf{X})]  \subseteq [\mathbf{X}]$, follows from applying the bidirected global Markov property. 

In addition, we will use the following restatement of the main result of \citet{Becker2005} in the Gaussian setting. 
\begin{thm}[\citet{Becker2005}]
\label{thm:gaussian_conc_trees}
Let $\mathbf{X}$ be a Gaussian random vector with undirected graph $\mathcal{G}_{un}(\mathbf{X})$. If $\mathcal{G}_{un}(\mathbf{X}) $ is a disjoint union of trees, then $\mathbf{X}$ is undirected faithful to $\mathcal{G}_{un}(\mathbf{X})$, i.e., $[\mathbf{X}] \subseteq [\mathcal{G}_{un}(\mathbf{X})]$.  
\end{thm}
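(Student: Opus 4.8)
The plan is to reduce the inclusion $[\mathbf{X}] \subseteq [\mathcal{G}_{un}(\mathbf{X})]$ to its pairwise form via localizability, and then to establish that pairwise statement by a contrapositive argument that exploits the tree structure through the precision matrix. Write $\mathcal{G} = \mathcal{G}_{un}(\mathbf{X})$. Since $[\mathbf{X}]$ is a semigraphoid it is localizable (Lemma \ref{lem:semi_local_comparison}), and $[\mathcal{G}]$ is localizable as noted in Section \ref{subsec:graphical_models}. Hence it suffices to prove the pairwise containment: for singletons $a,b$ and $S \subseteq V\backslash ab$, that $(a,b|S) \in [\mathbf{X}] \Rightarrow (a,b|S) \in [\mathcal{G}]$. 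Indeed, given a general $(A,B|S) \in [\mathbf{X}]$, the forward direction of localizability of $[\mathbf{X}]$ produces $(a,b|S') \in [\mathbf{X}]$ for every $a\in A$, $b\in B$, $S\subseteq S' \subseteq SAB\backslash ab$; the pairwise result places each in $[\mathcal{G}]$, and the backward direction of localizability of $[\mathcal{G}]$ returns $(A,B|S) \in [\mathcal{G}]$.

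It remains to show, for singletons $a,b$, that $a \ci b \mid S \Rightarrow a\perp_\mathcal{G} b\mid S$, which I would prove in contrapositive form. Suppose $a$ and $b$ are not separated by $S$ in the forest $\mathcal{G}$. Then $a$ and $b$ lie in a common tree and the unique path $\pi = (a=w_0, w_1, \dots, w_k = b)$ between them is disjoint from $S$; I must show $a \not\ci b \mid S$. Passing to the conditional law is the key device: letting $K = \Sigma^{-1}$ and $R = V\backslash S$, the conditional covariance of $\mathbf{X}_R$ given $\mathbf{X}_S$ is the inverse of the principal submatrix $K_{RR}$, so that $a \ci b \mid S$ holds if and only if $(K_{RR}^{-1})_{ab} = 0$. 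Two observations make $K_{RR}$ tractable: by the very definition of $\mathcal{G}_{un}(\mathbf{X})$ the support of $K$ equals the edge set of $\mathcal{G}$, so the support of $K_{RR}$ is the induced forest $\mathcal{G}[R]$, and every edge precision $K_{w_iw_{i+1}}$ along $\pi$ is nonzero and survives in $K_{RR}$ because $\pi \subseteq R$; moreover $K_{RR}$ is positive definite as a principal submatrix of a positive-definite matrix.

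The heart of the argument is thus the linear-algebra claim that for a positive-definite matrix with forest support, the $(a,b)$ entry of its inverse is nonzero whenever $a$ and $b$ are joined by a path $\pi$. Restricting to the tree component of $\mathcal{G}[R]$ containing $a$ and $b$ (other components decouple block-diagonally and do not affect the $(a,b)$ entry), a cofactor expansion organized around $\pi$ shows that the relevant minor factors as $\pm\left(\prod_{i=0}^{k-1} K_{w_iw_{i+1}}\right)\det\left(K_{\pi^c}\right)$, where $K_{\pi^c}$ is the principal submatrix on the off-path vertices. Since each path-edge precision is nonzero and $\det(K_{\pi^c}) > 0$ by positive definiteness, the minor is nonzero; dividing by $\det K_{RR} > 0$ gives $(K_{RR}^{-1})_{ab} \neq 0$ and hence $a \not\ci b \mid S$, completing the contrapositive.

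I expect this factorization of the minor to be the main obstacle, and would prove it by induction on the tree size via leaf-peeling: a leaf off the path contributes a positive diagonal factor to $\det(K_{\pi^c})$, while peeling along the path recovers one edge-precision factor at a time. It is precisely here that the forest hypothesis is indispensable: uniqueness of $\pi$ prevents contributions from competing paths that could otherwise cancel and force the inverse entry to vanish, so the mere presence of each edge of $\pi$ in $\mathcal{G}_{un}(\mathbf{X})$ (equivalently, $K_{w_iw_{i+1}}\neq 0$) suffices to guarantee dependence. An alternative, less computational route would verify that Gaussians are closed under the decomposable transitivity rule of Section \ref{subsec:pseudographoids} and deduce faithfulness from that rule together with localizability; the direct precision-matrix argument above is, however, self-contained in the Gaussian case.
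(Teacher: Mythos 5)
Your proposal is correct, but it takes a genuinely different route from the paper. The paper in fact offers no proof of this statement: it is quoted directly from \citet{Becker2005}, and within the paper's own architecture it is recovered as the special case of the general result (Theorem \ref{thm:conc_trees}) for relations closed under intersection and decomposable transitivity, combined with the fact that Gaussian conditional independence structures satisfy both rules --- essentially the ``alternative route'' you mention in your closing paragraph. Your main argument is instead a direct, self-contained Gaussian proof: reduce to the pairwise statement using localizability of $[\mathbf{X}]$ (a semigraphoid, Lemma \ref{lem:semi_local_comparison}) and of $[\mathcal{G}]$ (Lemma \ref{lemma:pseudo_conc_necessary}); characterize $a \ci b \mid S$ as the vanishing of $(K_{RR}^{-1})_{ab}$ with $K = \Sigma^{-1}$ and $R = V \backslash S$ via the Schur complement; and then invoke the fact that for a positive-definite matrix with forest support the $(a,b)$ cofactor factors as $\pm\bigl(\prod_{(u,v)\in\pi}K_{uv}\bigr)\det(K_{\pi^c})$, which cannot vanish since the path-edge entries are nonzero and $\det(K_{\pi^c})>0$ is a principal minor. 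That factorization is indeed correct --- it is the tree generalization of the classical tridiagonal-inverse formula, provable either by your leaf-peeling induction or by the permutation expansion of the minor, in which injectivity forces every nonzero term to traverse the unique path $\pi$ --- and your observation that uniqueness of $\pi$ is what rules out cancellation is exactly the right explanation of why faithfulness can fail for non-tree graphs. As for what each approach buys: your argument is elementary and transparent, but it is tied to the precision matrix and so silently requires $\Sigma$ to be nonsingular (a standing assumption you should state explicitly, though the paper makes the same implicit assumption in Lemma \ref{lem:gaussian_duality}); the paper's route through Theorem \ref{thm:conc_trees} needs the heavier semigraphoid machinery and the nontrivial, cited decomposable-transitivity property of Gaussians, but it applies beyond the Gaussian case, which is precisely the generality the paper exploits when dualizing to obtain Theorem \ref{thm:cov_trees}.
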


Lemma \ref{lem:gaussian_duality} allows ``dualization" of Theorem \ref{thm:gaussian_conc_trees} into Theorem \ref{thm:gaussian_cov_trees}, resulting in a much simpler proof below of the result on bidirected faithfulness of Gaussian bidirected Markov trees by \citet{Malouche2011}.

\begin{proof}[\textbf{Alternate proof of Theorem \ref{thm:gaussian_cov_trees}}]
Without loss of generality, assume that $\mathbf{X}$ is distributed as $\mathcal{N}(0,\Sigma)$. Let $\mathbf{Y}$ be a Gaussian random vector distributed as $\mathcal{N}(0, \Sigma^{-1})$. 

From Lemma \ref{lem:gaussian_duality}, $[\mathbf{Y}] = [\mathbf{X}]^\rceil$, and hence $\mathbf{X}_a \ci \mathbf{X}_b \iff \mathbf{Y}_a \ci \mathbf{Y}_b | \mathbf{Y}_{V\backslash ab}$. Therefore, by construction, $\mathcal{G}_{bi}(\mathbf{X}) = \mathcal{G}_{un}(\mathbf{Y})$. By assumption, $\mathcal{G}_{bi}(\mathbf{X})$ and hence also $\mathcal{G}_{un}(\mathbf{Y})$ is a disjoint union of trees, and it follows from Theorem \ref{thm:gaussian_conc_trees} that $\mathbf{Y}$ is undirected faithful to $\mathcal{G}_{un}(\mathbf{Y})$, i.e., $[\mathbf{Y}] \subseteq [\mathcal{G}_{un}(\mathbf{Y})]$.

As $\mathbf{Y}$ is Gaussian, $[\mathbf{Y}]$ satisfies the intersection rule, and therefore $[\mathbf{Y}]$ is undirected global Markov \citep{Pearl1986} with respect to its undirected graph $[\mathcal{G}_{un}(\mathbf{Y})]$, which in turn gives $[\mathcal{G}_{un}(\mathbf{Y})] \subseteq [\mathbf{Y}]$. Therefore, $[\mathcal{G}_{un}(\mathbf{Y})] = [\mathbf{Y}]$, and hence $[\mathcal{G}_{bi}(\mathbf{X})] = [\mathcal{G}_{un}(\mathbf{Y})] = [\mathbf{Y}]$.

Since $[\mathbf{Y}] = [\mathbf{X}]^\rceil$ from Lemma \ref{lem:gaussian_duality}, it follows that $[\mathcal{G}_{bi}(\mathbf{X})] = [\mathbf{X}]^\rceil$, and hence by definition $\mathbf{X}$ is bidirected faithful to $\mathcal{G}_{bi}(\mathbf{X})$.
\end{proof}

\subsection{The General Case}
Using the same dualization technique as used to prove Theorem \ref{thm:gaussian_cov_trees}, it is possible to produce a more general result concerning bidirected faithful trees directly from the main result of \citet{Becker2005}. 

We now state the general result given by \citet{Becker2005} regarding undirected faithfulness of random variables to tree graphs. 
\begin{thm}[\citet{Becker2005}]
\label{thm:conc_trees}
Let $\mathbf{X}$ be a random vector with $[\mathbf{X}]$ closed under intersection and decomposable transitivity.  If $\mathcal{G}_{un}(\mathbf{X})$ is a disjoint union of trees, then $[\mathbf{X}] = [\mathcal{G}_{un}(\mathbf{X})]$, i.e., $\mathbf{X}$ is both undirected global Markov and undirected faithful with respect to $\mathcal{G}_{un}(\mathbf{X})$.
\end{thm}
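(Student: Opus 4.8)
The plan is to prove the asserted equality by establishing its two inclusions separately: the global Markov inclusion $[\mathcal{G}_{un}(\mathbf{X})] \subseteq [\mathbf{X}]$ and the faithfulness inclusion $[\mathbf{X}] \subseteq [\mathcal{G}_{un}(\mathbf{X})]$. The first is immediate and carries none of the real content: $[\mathbf{X}]$ is a semigraphoid (being the independence structure of a random vector), it is closed under intersection by hypothesis, and it is undirected pairwise Markov with respect to $\mathcal{G}_{un}(\mathbf{X})$ by the very definition of $\mathcal{G}_{un}$. Theorem~\ref{thm:global_conc} then yields the undirected global Markov property, i.e. $[\mathcal{G}_{un}(\mathbf{X})] \subseteq [\mathbf{X}]$. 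The substance of the theorem is the reverse (faithfulness) inclusion, and this is where both the forest hypothesis and decomposable transitivity must be used. For that inclusion I would first reduce to singletons: since $[\mathbf{X}]$ is a semigraphoid it is localizable (Lemma~\ref{lem:semi_local_comparison}), and $[\mathcal{G}_{un}(\mathbf{X})]$ is localizable as well, so it suffices to show for singletons $a,b$ and $S\subseteq V\backslash ab$ that $(a,b|S)\in[\mathbf{X}]\Rightarrow a\perp_\mathcal{G} b|S$. I would prove the contrapositive by induction on the length $n$ of the unique tree path joining $a$ and $b$: if $a$ and $b$ lie in a common tree and no interior vertex of that path lies in $S$ (so $S$ does not separate them), then $(a,b|S)\notin[\mathbf{X}]$.

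For the base case $n=1$ (so $a\sim b$ is an edge), I would run a secondary downward induction on the number of free vertices $W=V\backslash(Sab)$. When $W=\emptyset$ the statement $(a,b|V\backslash ab)\notin[\mathbf{X}]$ is exactly the definition of the edge. Otherwise pick $d\in W$; in a forest exactly one of the following holds: the $a$--$d$ path passes through $b$, or the $b$--$d$ path passes through $a$ (and if $d$ lies in another component both separations below hold vacuously). In the first case $a\perp_\mathcal{G} d|Sb$, so $(a,d|Sb)\in[\mathcal{G}_{un}(\mathbf{X})]\subseteq[\mathbf{X}]$ by global Markov; together with the assumed $(a,b|S)\in[\mathbf{X}]$, contraction followed by weak union gives $(a,b|Sd)\in[\mathbf{X}]$, contradicting the inductive hypothesis. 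The second case is symmetric upon interchanging $a$ and $b$. Notably this step uses only the semigraphoid axioms together with the already-established global Markov property, so decomposable transitivity is not yet needed.

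For the inductive step $n\geq 2$, let $v_1$ be the neighbour of $a$ on the path to $b$. Deleting $v_1$ disconnects the forest; writing $C_a$ for the component containing $a$, I would split $S=B\sqcup D$ with $B=S\cap C_a$ and $D=S\backslash B$, a genuine partition since $v_1\notin S$. I then apply decomposable transitivity in contrapositive form with $c=v_1$ and $e=b$. Its three inputs are: $(a,v_1|B)\notin[\mathbf{X}]$ from the base case (the edge $a\sim v_1$); $(v_1,b|D)\notin[\mathbf{X}]$ from the inductive hypothesis, since the $v_1$--$b$ path is shorter and $D$ misses its interior; and $(aB,bD|v_1)\in[\mathbf{X}]$ because $v_1$ separates $\{a\}\cup B\subseteq C_a$ from $\{b\}\cup D$ in $\mathcal{G}_{un}(\mathbf{X})$, so this membership follows from global Markov. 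Decomposable transitivity then delivers $(a,b|BD)=(a,b|S)\notin[\mathbf{X}]$, closing the induction; the pairwise statement combined with localizability of $[\mathbf{X}]$ and $[\mathcal{G}_{un}(\mathbf{X})]$ upgrades this to $[\mathbf{X}]\subseteq[\mathcal{G}_{un}(\mathbf{X})]$.

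The main obstacle I anticipate is precisely the inductive step: one must choose the decomposition $S=B\sqcup D$ in exact alignment with the cut vertex $v_1$ and then verify that all three antecedents of decomposable transitivity hold at once, the delicate one being that $v_1$ genuinely separates $\{a\}\cup B$ from $\{b\}\cup D$, which is what licenses global Markov for the third input. The forest hypothesis is indispensable here, since it is what makes the $a$--$b$ path unique and $v_1$ a cut vertex; by comparison the reduction via localizability and the global Markov inclusion via Theorem~\ref{thm:global_conc} are routine.
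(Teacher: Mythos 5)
Your proof is correct, but there is nothing in the paper to compare it against line by line: the paper never proves Theorem~\ref{thm:conc_trees}. It is imported verbatim from \citet{Becker2005} (merely restated in the language of relations) and serves only as the input to the dualization that yields Theorem~\ref{thm:cov_trees}. What you have done is supply the missing content, and you do it entirely with machinery already in the paper, which is a genuine merit of your route. The global Markov inclusion $[\mathcal{G}_{un}(\mathbf{X})]\subseteq[\mathbf{X}]$ via Theorem~\ref{thm:global_conc} is exactly as routine as you say, and the localizability reduction to singleton triples is legitimate in both directions ($[\mathbf{X}]$ is localizable by Lemma~\ref{lem:semi_local_comparison}, and $[\mathcal{G}_{un}(\mathbf{X})]$ by the appendix lemma). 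I checked both inductions. In the base case, the downward induction on $|V\backslash Sab|$ uses only the definition of the edge, the already-established global Markov property, contraction, weak union and symmetry; the forest dichotomy for the free vertex $d$ (the $a$--$d$ path meets $b$, or the $b$--$d$ path meets $a$, or $d$ lies in another component and both separations hold vacuously) is sound, and monotonicity of graph separation licenses $a\perp_{\mathcal{G}}d\,|\,Sb$. In the inductive step, all three antecedents of the contrapositive of decomposable transitivity hold: $(a,v_1|B)\notin[\mathbf{X}]$ by the base case; $(v_1,b|D)\notin[\mathbf{X}]$ by the inductive hypothesis, where the needed fact that $D$ avoids the interior of the $v_1$--$b$ path follows because $D\subseteq S$ and $S$ avoids the whole interior of the $a$--$b$ path; and $aB\perp_{\mathcal{G}}Db\,|\,v_1$ because $aB\subseteq C_a$, $Db\cap C_a=\emptyset$, and any path leaving the component $C_a$ of $\mathcal{G}-v_1$ must pass through the cut vertex $v_1$, so global Markov applies. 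The disjointness requirements of the rule are also met since $v_1\notin S$ and $B\sqcup D$ partitions $S$. What your approach buys beyond the citation: it establishes the result at the level of abstract relations closed under intersection and decomposable transitivity, with no distributional or positivity assumptions, which is precisely the generality the paper's dualization argument for Theorem~\ref{thm:cov_trees} implicitly relies on; it also makes visible that decomposable transitivity enters only in the path-length induction step, while the edge case needs nothing beyond the semigraphoid axioms and global Markov.
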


In the statement of Theorem \ref{thm:conc_trees}, undirected faithfulness is equivalent to stating that $[\mathbf{X}] \subseteq [\mathcal{G}_{un}(\mathbf{X})]$. The other inclusion follows from the fact that $[\mathbf{X}]$ is undirected global Markov with respect to $\mathcal{G}_{un}(\mathbf{X})$. We now proceed to prove, in Theorem \ref{thm:cov_trees} below, that all relations are bidirected faithful to their bidirected trees under the composition and dual decomposable transitivity assumptions. Theorem \ref{thm:cov_trees} ``dualizes" Theorem \ref{thm:conc_trees}, using the same method as was used for the Gaussian case. 
\begin{thm}
\label{thm:cov_trees}
Let $\mathbf{X}$ be a random vector with $[\mathbf{X}]$ closed under composition and dual decomposable transitivity. If $\mathcal{G}_{bi}(\mathbf{X})$ is a disjoint union of trees, then $[\mathbf{X}]^\rceil = [\mathcal{G}_{bi}(\mathbf{X})]$, i.e., $\mathbf{X}$ is both bidirected global Markov and bidirected faithful with respect to $\mathcal{G}_{bi}(\mathbf{X})$.
\end{thm}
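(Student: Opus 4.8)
The plan is to dualize Theorem \ref{thm:conc_trees} in exactly the manner used for the Gaussian case, but to replace the appeal to Lemma \ref{lem:gaussian_duality} with the purely combinatorial dualization afforded by Lemma \ref{lem:dual_operator}. First I would write $\mathcal{L} = [\mathbf{X}]$, which is a semigraphoid since it is the conditional independence structure of a random vector. By hypothesis $\mathcal{L}$ is closed under composition and dual decomposable transitivity. Using the identity $(\mathcal{L}^\rceil)^\rceil = \mathcal{L}$ together with Lemma \ref{lem:dual_operator} (parts 2, 3, and 5), the dual relation $\mathcal{L}^\rceil$ is then a semigraphoid that is closed under intersection and under decomposable transitivity.

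Next I would identify the relevant graphs. Since $(a,b|V\backslash ab)^\rceil = (a,b|\emptyset)$, we have $(a,b)\notin E_{un}(\mathcal{L}^\rceil) \iff (a,b|V\backslash ab) \in \mathcal{L}^\rceil \iff (a,b|\emptyset)\in\mathcal{L} \iff (a,b)\notin E_{bi}(\mathcal{L})$, so that $\mathcal{G}_{un}(\mathcal{L}^\rceil) = \mathcal{G}_{bi}(\mathbf{X})$. In particular, the hypothesis that $\mathcal{G}_{bi}(\mathbf{X})$ is a disjoint union of trees translates directly into the statement that $\mathcal{G}_{un}(\mathcal{L}^\rceil)$ is a disjoint union of trees. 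With these ingredients in hand I would apply Theorem \ref{thm:conc_trees} to $\mathcal{L}^\rceil$ to conclude that $\mathcal{L}^\rceil = [\mathcal{G}_{un}(\mathcal{L}^\rceil)] = [\mathcal{G}_{bi}(\mathbf{X})]$, which is precisely the desired identity $[\mathbf{X}]^\rceil = [\mathcal{G}_{bi}(\mathbf{X})]$; the forward inclusion reads as bidirected global Markov and the reverse inclusion as bidirected faithfulness.

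The hard part will be that Theorem \ref{thm:conc_trees} is phrased for the conditional independence structure $[\mathbf{X}]$ of an \emph{actual} random vector, whereas $\mathcal{L}^\rceil = [\mathbf{X}]^\rceil$ need not be realizable as the conditional independence structure of any random vector. This is exactly why the Gaussian argument could invoke Lemma \ref{lem:gaussian_duality} to produce a concrete dual vector $\mathbf{Y}$ with $[\mathbf{Y}] = [\mathbf{X}]^\rceil$; in the general case no such realization is available. I would therefore first observe that the proof of Theorem \ref{thm:conc_trees} by \citet{Becker2005} uses $\mathbf{X}$ only through the abstract closure properties of its relation, namely that $[\mathbf{X}]$ is a semigraphoid closed under intersection and decomposable transitivity. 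Hence the theorem is genuinely a statement about any relation $\mathcal{R} \subseteq \mathcal{T}(V)$ with these properties: if $\mathcal{R}$ is a semigraphoid closed under intersection and decomposable transitivity and $\mathcal{G}_{un}(\mathcal{R})$ is a disjoint union of trees, then $\mathcal{R} = [\mathcal{G}_{un}(\mathcal{R})]$. Recording this relational restatement is the step that legitimizes the dualization, after which the argument above applies verbatim with $\mathcal{R} = \mathcal{L}^\rceil$.
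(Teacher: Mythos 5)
Your proposal is correct and follows essentially the same route as the paper: dualize the closure hypotheses via Lemma \ref{lem:dual_operator}, observe that $\mathcal{G}_{bi}(\mathbf{X}) = \mathcal{G}_{un}([\mathbf{X}]^\rceil)$, and apply Theorem \ref{thm:conc_trees} to the dual relation $[\mathbf{X}]^\rceil$. Your final paragraph, which restates Theorem \ref{thm:conc_trees} as a claim about an arbitrary relation that is a semigraphoid closed under intersection and decomposable transitivity (necessary because $[\mathbf{X}]^\rceil$ need not be realizable as the conditional independence structure of any random vector), makes explicit a subtlety that the paper's own proof glosses over when it writes $\mathcal{G}_{un}(\mathbf{X}^\rceil)$ as if a dual random vector existed.
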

\begin{proof}
If $[\mathbf{X}]$ is closed under composition and dual decomposable transitivity, then by Lemma \ref{lem:dual_operator}, $[\mathbf{X}]^\rceil$ is closed under decomposable transitivity and intersection. Furthermore, since $(a,b|\emptyset) \in [\mathbf{X}] \iff (a,b|V\backslash ab) \in [\mathbf{X}]^\rceil$, it follows that $\mathcal{G}_{bi}(\mathbf{X})$ is the undirected graph of $[\mathbf{X}]^\rceil$, i.e., $\mathcal{G}_{bi}(\mathbf{X}) = \mathcal{G}_{un}(\mathbf{X}^\rceil)$. As $\mathcal{G}_{bi}(\mathbf{X}) = \mathcal{G}_{un}(\mathbf{X}^\rceil)$ is assumed to be a disjoint union of trees, it follows by Theorem \ref{thm:conc_trees} that $[\mathbf{X}]^\rceil = [\mathcal{G}_{bi}(\mathbf{X})]$, completing the proof.  
\end{proof}
In Theorem \ref{thm:cov_trees}, bidirected faithfulness is the statement that $[\mathbf{X}]^\rceil \subseteq [\mathcal{G}_{bi}(\mathbf{X})]$. The reverse inclusion follows from the bidirected global Markov property. 

We now recover the exact theorem of \citet{Malouche2011} as a special case of Theorem \ref{thm:cov_trees}. It remains only to show that the Gaussian distribution satisfies dual decomposable transitivity. This property can also be shown using duality, as in the following lemma. 
\begin{lemma}
\label{lemma:gaussian_dual_decomp}
Let $\mathbf{X}$ be a Gaussian random vector. Then $[\mathbf{X}]$ is closed under dual decomposable transitivity. 
\end{lemma}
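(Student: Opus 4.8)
The plan is to exploit the duality machinery already assembled in the paper rather than compute anything directly with Gaussian covariances. The key observation is that by Lemma \ref{lem:dual_operator}(5), a relation $\mathcal{L}$ is closed under decomposable transitivity if and only if $\mathcal{L}^\rceil$ is closed under dual decomposable transitivity. So to show that $[\mathbf{X}]$ is closed under dual decomposable transitivity, it suffices to show that $[\mathbf{X}]^\rceil$ is closed under (ordinary) decomposable transitivity.

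First I would invoke Lemma \ref{lem:gaussian_duality}: since $\mathbf{X}$ is Gaussian, say distributed as $\mathcal{N}(0,\Sigma)$, there is a companion Gaussian vector $\mathbf{Y}\sim\mathcal{N}(0,\Sigma^{-1})$ with $[\mathbf{Y}] = [\mathbf{X}]^\rceil$. Thus the dual relation $[\mathbf{X}]^\rceil$ is itself the conditional independence structure of an honest Gaussian random vector. This is the crucial reduction: dualizing a Gaussian relation keeps us inside the Gaussian world, so any closure property known to hold for all Gaussian relations automatically holds for $[\mathbf{X}]^\rceil$.

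Next I would appeal to the fact, established by \citet{Becker2005}, that the conditional independence structure of any Gaussian random vector is closed under decomposable transitivity. Applying this to $\mathbf{Y}$ gives that $[\mathbf{Y}] = [\mathbf{X}]^\rceil$ is closed under decomposable transitivity. Then by Lemma \ref{lem:dual_operator}(5), $\bigl([\mathbf{X}]^\rceil\bigr)^\rceil = [\mathbf{X}]$ is closed under dual decomposable transitivity, which is exactly the claim. Equivalently, one can phrase the final two steps through Lemma \ref{lem:gaussian_dual_closure}: decomposable transitivity holds for every Gaussian relation, so its dual, dual decomposable transitivity, also holds for every Gaussian relation, including $[\mathbf{X}]$.

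The only genuine obstacle is the input fact that Gaussian relations are closed under decomposable transitivity; everything else is a mechanical application of the duality lemmas. I would treat that closure as a known Gaussian property supplied by \citet{Becker2005} (it is implicit in the validity of Theorem \ref{thm:gaussian_conc_trees}, which is the Gaussian specialization of Theorem \ref{thm:conc_trees} whose hypothesis is precisely decomposable transitivity). If one wanted a self-contained argument, the remaining work would be to verify decomposable transitivity directly from the partial-correlation characterization of Gaussian conditional independence, but the whole point of the duality approach here is to avoid that computation by transporting the known undirected Gaussian result through the dualization operator.
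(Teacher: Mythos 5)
Your proposal is correct and matches the paper's own proof essentially step for step: both pass to the companion Gaussian vector $\mathbf{Y}\sim\mathcal{N}(0,\Sigma^{-1})$ via Lemma \ref{lem:gaussian_duality}, invoke \citet{Becker2005} for closure of $[\mathbf{Y}]$ under decomposable transitivity, and then dualize (the paper does this implicitly, you make the appeal to Lemma \ref{lem:dual_operator}(5) explicit). No gaps; the explicit citation of the dualization lemma is if anything a slight improvement in rigor over the paper's phrasing.
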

\begin{proof}
Without loss of generality, assume $\mathbf{X}$ is distributed as $\mathcal{N}(0,\Sigma)$, and let $\mathbf{Y}$ be a $\mathcal{N}(0, \Sigma^{-1})$ random vector. As Gaussian random vectors satisfy decomposable transitivity \citep{Becker2005}, $[\mathbf{Y}]$ is closed under decomposable transitivity.  By Lemma \ref{lem:gaussian_duality}, $[\mathbf{Y}]^\rceil = [\mathbf{X}]$, and hence $[\mathbf{X}]$ is closed under dual decomposable transitivity. 
\end{proof}

In view of Lemma \ref{lemma:gaussian_dual_decomp}, Theorem \ref{thm:cov_trees} above indeed has Theorem \ref{thm:gaussian_cov_trees} as a special case.

\section{Conclusions}
This paper examined duality and its implications on undirected and bidirected graphical models. The dualization technique described here, which explicitly relates undirected graphs to bidirected graphs in a dual sense, was shown to facilitate many proofs regarding bidirected graphs. In demonstrating this method, new results regarding such graphs were derived.The pseudographoid and reverse pseudographoid rules were shown (along with localizability) to be sufficient conditions for equivalence between pairwise and global Markov properties. The pseudographoid and reverse pseudographoid rules were also shown to be equivalent to the intersection and composition rules in view of the semigraphoid axioms.  Finally, we demonstrated that the bidirected faithful tree proof of \cite{Malouche2011} can be accomplished by combining the previous result for undirected faithfulness with the notion of duality, yielding a more general result than existed previously. 

\section*{Acknowledgements}
The authors would like to thank Kayvon Sadeghi and Milan Studen\'{y} for useful discussions. 

\bibliographystyle{plainnat}
\bibliography{refs_duality}

\newpage
\appendix
\section{Additional Proofs}
\begin{lemma}
\label{lemma:pseudo_conc_necessary}
For any undirected graph $\mathcal{G} = (V,E)$, $[\mathcal{G}]$ is localizable and closed under the pseudographoid rule $(P)$. 
\end{lemma}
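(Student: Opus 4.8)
The plan is to verify the two asserted closure properties of $[\mathcal{G}]$ separately, working directly from the definition of graph separation: $a \perp_\mathcal{G} b \mid T$ means that every path in $\mathcal{P}(a,b)$ contains at least one element of $T$. Throughout I write $Sc$ for $S \cup \{c\}$ and $Sb$ for $S \cup \{b\}$.

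For localizability, I would handle the two directions of the defining biconditional in turn. The $(\Rightarrow)$ direction is just monotonicity of separation: assuming $A \perp_\mathcal{G} B \mid S$, fix singletons $a \in A$, $b \in B$ and any $S'$ with $S \subseteq S' \subseteq SAB\backslash ab$; since $a \in A$ and $b \in B$, every path in $\mathcal{P}(a,b)$ meets $S$, and as $S \subseteq S'$ it also meets $S'$, giving $a \perp_\mathcal{G} b \mid S'$. The $(\Leftarrow)$ direction is simpler still: because $A,B,S$ are pairwise disjoint we have $a,b \notin S$, so $S' = S$ is an admissible choice in the hypothesis, and specializing to it yields $a \perp_\mathcal{G} b \mid S$ for every $a \in A$, $b \in B$; but that family of pairwise separations is, by the definition of separation for sets, precisely the statement $A \perp_\mathcal{G} B \mid S$. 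This settles localizability.

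For the pseudographoid rule, suppose $(a,b \mid Sc), (a,c \mid Sb) \in [\mathcal{G}]$ with $a,b,c$ distinct singletons, and I would prove $a \perp_\mathcal{G} b \mid S$ by contradiction using path surgery. Suppose some path $\pi \in \mathcal{P}(a,b)$ avoids $S$. Since $Sc$ separates $a$ and $b$, the path $\pi$ must meet $Sc$; as it avoids $S$, it must contain $c$. Let $\pi'$ be the initial segment of $\pi$ from $a$ up to its (necessarily unique) occurrence of $c$. Then $\pi' \in \mathcal{P}(a,c)$, it avoids $S$ as a subpath of $\pi$, and it avoids $b$ because $b$ is the distinct terminal vertex of $\pi$ and $c \neq b$; hence $\pi'$ avoids $Sb$, contradicting $a \perp_\mathcal{G} c \mid Sb$. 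Therefore no such $\pi$ exists and $a \perp_\mathcal{G} b \mid S$. The symmetric argument, interchanging the roles of $b$ and $c$, gives $a \perp_\mathcal{G} c \mid S$, so $[\mathcal{G}]$ is closed under $(P)$.

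The only genuinely nontrivial step is the path-surgery argument for the pseudographoid rule; everything else is bookkeeping with the definition of separation. The key observations that make the surgery go through are that a subpath of a path is again a path (its vertices remain distinct) and that the terminal vertex $b$ cannot reappear earlier on $\pi$, which is exactly what allows the $a$-to-$c$ segment to avoid all of $Sb$ rather than merely $S$.
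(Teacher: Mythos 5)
Your proof is correct and follows essentially the same route as the paper's: monotonicity of separation plus specialization to $S'=S$ for localizability, and the path-surgery contradiction (splitting the $S$-avoiding path at $c$ and noting the initial segment avoids $Sb$) for the pseudographoid rule. Your write-up is in fact slightly more careful than the paper's, since it explicitly justifies why the $a$-to-$c$ segment avoids $b$.
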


\begin{proof}
We first show closure under $(P)$. Assume that $(a, b|Sc), (a,c|Sb) \in [\mathcal{G}]$. Then by definition $a \perp_\mathcal{G} b | Sc$ and $a \perp_\mathcal{G} c | Sb$. Assume that $a \not\perp_\mathcal{G} b | S$. Then there exists a path $p_{ab}$ connecting $a$ and $b$ which does not intersect $S$. Since $a \perp_\mathcal{G} b | Sc$, $p_{ab}$ must intersect $c$. In this case, $p_{ab}$ can be split into paths $p_{ac}$ and $p_{cb}$, neither of which intersect $S$. But then $p_{ac}$ is a path connecting $a$ and $c$, not intersecting $Sb$. This yields a contradiction, as $(a \perp_\mathcal{G} c | Sb$. Hence, $a \perp_\mathcal{G} b | S$ and similarly $a \perp_\mathcal{G} b | S$, which implies by the undirected global Markov rule that $(a,b|S), (a,c|S) \in \mathcal{L}_\mathcal{G}$. Therefore $[\mathcal{G}]$ is closed under $(P)$. 

Next, we show localizability of $[\mathcal{G}]$. If $(A,B|S) \in [\mathcal{G}]$, by construction $A \perp_\mathcal{G} B | S$. In this case, it follows that $a\perp_\mathcal{G} b | S$ for any singletons $a\subseteq A, b\subseteq B$. This implies that $a\perp_\mathcal{G} b | S^\prime$ for any $S \subseteq S^\prime\subseteq SAB\backslash ab$. It follows by construction of $[\mathcal{G}]$ that $(a,b|S^\prime) \in [\mathcal{G}]$ for any such $S^\prime$, showing the $(\Rightarrow)$ direction of the localizability rule.

Finally, let $A,B,S \subseteq V$ be pairwise disjoint. Assume that $(a,b|S^\prime) \in [\mathcal{G}]$ whenever $a\subseteq A, b\subseteq B$ are singletons with $S^\prime$ satisfying $S \subseteq S^\prime \subseteq SAB\backslash\{a,b\}$. Then by construction of $[\mathcal{G}]$, $a \perp_\mathcal{G} b | S^\prime$ for any choice of $a,b,S^\prime$. In particular, for any choice of $a,b$, $a\perp_\mathcal{G} b | S$, which implies that $A \perp_\mathcal{G} B | S$. It follows that $(A,B|S) \in [\mathcal{G}]$, showing the $(\Leftarrow)$ direction and completing the proof. 
\end{proof}

\begin{proof}[\textbf{Proof of Theorem \ref{thm:pseudo_conc}} ($\Leftarrow$).]
First assume that $\mathcal{L} \cap [\mathcal{G}]$ is $(L)$ and $(P)$. We first show that $a \perp_\mathcal{G} b| S \Rightarrow (a,b|S) \in \mathcal{L} \cap [\mathcal{G}]$ when $a$ and $b$ are singletons by backward induction on $S$. This follows the proof of Theorem \ref{thm:global_conc} by \cite{Pearl1986}. 

If $a \perp_\mathcal{G} b | S$ for $|S| = |V| - 2$, with $|a| = |b| = 1$ and $S = V \backslash ab$, then $(a,b|S) \in \mathcal{L} \cap [\mathcal{G}]$ by the undirected pairwise Markov assumption. Assume now that whenever $a \perp_\mathcal{G} b | S^\prime $ for some $|a| = |b| = 1$ and $|S^\prime| = k\leq |V| -2$, $(a,b|S^\prime) \in \mathcal{L} \cap [\mathcal{G}]$. Let then $a \perp_\mathcal{G} b | S$ with $|S| = k  - 1$. As $|a| = |b| = 1$ and $|S| < |V| -2$, there exists some singleton $c \subseteq V$ disjoint with $Sab$.  If  $a \perp_\mathcal{G} b | S$, then $a \perp_\mathcal{G} b | Sc$, and by the inductive hypothesis it follows that $(a,b|Sc) \in \mathcal{L} \cap [\mathcal{G}]$. 

Next,  $a \perp_\mathcal{G} b | S \land a \perp_\mathcal{G} b | Sc$ implies that $a\perp_\mathcal{G} c |S \lor w \perp_\mathcal{G} b | S$. To see this, note that if $a\not\perp_\mathcal{G} c |S \land c \not\perp_\mathcal{G} b | S$, there exists a path $p_{ac}$ which does not intersect $S$, and a path $p_{cb}$ which does not intersect $S$, Then the path created by joining $p_{ac}$ and $p_{cb}$ is a path from $a$ to $b$ which does not intersect $S$, yielding a contradiction. Assume then, without loss of generality, that $a \perp_\mathcal{G} c | S$. Then $a \perp_\mathcal{G} c | Sb$, and by the inductive hypothesis $(a,c|Sb) \in \mathcal{L}$. As $\mathcal{L}$ satisfies the pseudographoid rule $(P)$, $(a,b|Sc), (a,c|Sb) \in \mathcal{L}  \cap [\mathcal{G}] \Rightarrow (a,b|S) \in \mathcal{L} \cap [\mathcal{G}]$. 

Finally, assume that $A \perp_\mathcal{G} B | S$ for arbitrary disjoint $A,B,S\subseteq V$. Then $a \perp_\mathcal{G} b | S$ for any singletons $a \subseteq A, b\subseteq B$, which implies that $a \perp_\mathcal{G} b | S^\prime$ for any $S\subseteq S^\prime\subseteq SAB\backslash ab$. By the previous argument, $(a,b|S^\prime) \in \mathcal{L} \cap [\mathcal{G}]$ for any such $S^\prime$, which by localizability of $\mathcal{L} \cap [\mathcal{G}]$ implies that $(A,B|S) \in \mathcal{L} \cap [\mathcal{G}]$ as required. It follows that $\mathcal{L} \cap [\mathcal{G}]$ and hence $\mathcal{L}$ is undirected global Markov with respect to $\mathcal{G}$. 
\end{proof}

\begin{proof}[\textbf{Proof of Lemma \ref{lemma:pseudo_comp_equiv}}]
To begin, we define a generalized reverse pseudographoid rule by
\begin{align*}
(A,B|S), (A,C|S) \in \mathcal{L} \Rightarrow (A,B|SC), (A,C|SB) \in \mathcal{L}, \ : \ |A|\leq l, |B| \leq m, |C| \leq n. 
\end{align*}

This rule will be denoted by $(R_{lmn})$. Note that $(R_{111})$ is the reverse pseudographoid rule for a semigraphoid, while $(R_{|V|-3,|V|-3,|V|-3})$ is equivalent to composition. We prove the lemma by induction separately on each of $l$, $m$, and $n$, beginning with $l$ in the following claim.

\subsubsection*{Claim \ref{lemma:pseudo_comp_equiv}.1}
If $\mathcal{L}$ is closed under $(R_{k11})$ for some $k$, it is also closed under $(R_{(k+1)11}$. 

To prove Claim \ref{lemma:pseudo_comp_equiv}.1, assume that for some pairwise disjoint $A,a,b,c,S$ with $|A| = k$ and $a,b,c$ singletons that $(Aa,b|S), (Aa,c|S) \in \mathcal{L}$. Then first,
\begin{align*}
(Aa,b|S) \in \mathcal{L} &\Rightarrow_{(U)} (A,b|Sa) \in \mathcal{L} \\
(Aa,c|S) \in \mathcal{L} &\Rightarrow_{(U)} (A,c|Sa) \in \mathcal{L} \\
(A,b|Sa), (A,c|Sa) \in \mathcal{L} & \Rightarrow_{(R_{k11})} (A,bc|Sa) \in \mathcal{L}. 
\end{align*}
Next, 
\begin{align*}
(Aa,b|S) \in \mathcal{L} & \Rightarrow_{(D)} (a,b|S) \in \mathcal{L}\\
(Aa,c|S) \in \mathcal{L} & \Rightarrow_{(D)} (a,c|S) \in \mathcal{L}\\
a,b|S), (a,c|S) \in \mathcal{L} &\Rightarrow_{(R_{111})} (a,bc|S) \in \mathcal{L}. 
\end{align*}
Finally,
\begin{align*}
(a,bc|S), (A,bc|Sa) \in \mathcal{L} \Rightarrow_{(C)} (Aa,bc|S) \in \mathcal{L}
\end{align*}
which completes the proof of Claim \ref{lemma:pseudo_comp_equiv}.1. 

Claim \ref{lemma:pseudo_comp_equiv}.2 below completes the induction in the second coordinate. 
\subsubsection*{Claim \ref{lemma:pseudo_comp_equiv}.2}
If $\mathcal{L}$ is closed under $(R_{(|V|-3)k1})$ for some $k$, it is also closed under $(R_{(|V|-3)(k+1)1})$. 

To prove the claim, assume that for pairwise disjoint $A,B,b,C,S$ with $|B| = k$ and $|b| = 1$ that $(A,Bb|S), (A,c|S) \in \mathcal{L}$. Then first,
\begin{align*}
(A,Bb|S) \in \mathcal{L} & \Rightarrow_{(D)} (A,B|S) \in \mathcal{L}\\
(A,B|S), (A,c|S) \in \mathcal{L} & \Rightarrow_{(R_{(|V|-3)k1})} (A,Bc|S) \in \mathcal{L}. 
\end{align*}
Next, 
\begin{align*}
(A,Bc|S) \in \mathcal{L} &\Rightarrow_{(U)}  (A,c|SB) \in \mathcal{L} \\
 A,Bb|S) \in \mathcal{L} & \Rightarrow_{(U)}  (A,b|SB) \in \mathcal{L} \\
(A,b|SB), (A,c|SB) \in \mathcal{L}  &\Rightarrow_{(R_{(|V|-3)k1})}  (A,bc|SB) \in \mathcal{L}. 
\end{align*}
Finally, 
\begin{align*}
(A,B|S), (A,bc|SB) \in \mathcal{L} \Rightarrow_{(C)}(A,Bbc|S)
\end{align*}
which proves the claim.

Finally, Claim \ref{lemma:pseudo_comp_equiv}.3 completes the proof.
\subsubsection*{Claim \ref{lemma:pseudo_comp_equiv}.3}
If $\mathcal{L}$ is closed under $(R_{(|V|-3)(|V|-3)k})$ for some $k$, it is also closed under $(R_{(|V|-3)(|V|-3)(k+1)})$. 

To see this, let $A,B,C,c,S$ be pairwise disjoint with $|C| = k$ and $|c| = 1$, and assume that $(A,B|S), (A,Cc|S) \in \mathcal{L}$. Then
\begin{align*}
(A,Cc|S) \in \mathcal{L} &\Rightarrow_{(D)} (A,C|S) \in \mathcal{L} \\
(A,B|S), (A,C|S) \in \mathcal{L}& \Rightarrow_{(R_{(|V|-3)(|V|-3)k})} (A,BC|S) \in \mathcal{L} \\
(A,Cc|S) \in \mathcal{L} & \Rightarrow_{(D)} (A,c|S) \in \mathcal{L}\\
(A,BC|S), (A,c|S) \in \mathcal{L} &\Rightarrow_{(R_{(|V|-3)(|V|-3)k})} (A,BCc|S) \in \mathcal{L}, 
\end{align*}
which proves the claim and completes the proof of the lemma. 
\end{proof}

\end{document}